\newtheorem{proposition}{Proposition}
\newtheorem{theorem}{Theorem}
\newtheorem{corollary}{Corollary}
\theoremstyle{definition}
\theoremstyle{remark}
\newtheorem{example}{Example}
\newcommand{\R}{\ensuremath{\mathbb{R}}}
\newcommand{\F}{\ensuremath{\mathcal{F}}\xspace}
\newcommand{\E}{\ensuremath{\mathbb{E}}}
\newcommand{\be}[1]{\begin{equation}\label{#1}}
\newcommand{\ee}{\end{equation}}
\DeclareMathOperator*{\argmax}{arg\,max}
\newcommand{\hsn}[1]{{\color{blue} (Shaoning: #1)}}
\newcommand{\indicator}[1]{\ensuremath{\mathbbm{1}_{#1}}}
\newcommand{\np}{\ensuremath{\mathcal{NP}}}
\newcommand{\bigO}[1]{\ensuremath{\mathcal{O}(#1 )}}
\def\DoubleSpacedXI{\linespread{1.5}}
\title[]{\large Fractional 0--1 programming and Submodularity}
\author{Shaoning Han$^a$, Andr\'{e}s G\'{o}mez$^a$ and Oleg A. Prokopyev$^{b,*}$\footnote{$^*$Corresponding author. Phone: +1-412-624-9833. E-mail: droleg@pitt.edu.} }
\address{$^a$Industrial \& Systems Engineering, University of Southern California, Los Angeles, CA 90089 \\ $^b$Industrial Engineering, University of Pittsburgh, Pittsburgh, PA 15261}
\begin{document}
\begingroup
\def\uppercasenonmath#1{} 
\let\MakeUppercase\relax 
\maketitle
\endgroup
	
	\begin{abstract}
		 \looseness-1 In this note we study multiple-ratio fractional 0--1 programs, a broad class of \np-hard combinatorial optimization problems. In particular, under some relatively mild assumptions we provide a complete characterization of the conditions, which ensure that a single-ratio function is submodular. Then we illustrate our theoretical results with the assortment optimization and facility location problems, and discuss practical situations that guarantee submodularity in the considered application settings. In such cases, near-optimal solutions for multiple-ratio fractional 0--1 programs can be found via simple greedy algorithms.
		\vskip 3mm
		\noindent
		\textbf{Keywords} Fractional 0--1 programming, hyperbolic 0--1 programming, multiple ratios, single ratio, submodularity, assortment optimization, facility location, greedy algorithm.
		
	\end{abstract}
\section{Introduction}\label{sec:intro}
We consider a multiple-ratio \emph{fractional} 0--1 program given by:
\begin{equation}\label{eq:intro}
	\max_{x\in\mathcal{F}}\;\sum_{k\in M}\frac{\sum_{i\in N}a_{ki}x_i}{b_{k0}+\sum_{i\in N}b_{ki}x_i},
\end{equation}
where $M=\{1,\dots,m\}$, $N=\{1,\dots,n\}$ and $\mathcal{F}:=\{x\in\{0,1\}^n: \ Dx\le d\}$ for given $D\in\R^{q\times n}$ and $d\in\R^q$.  Problem \eqref{eq:intro} is often referred to as a multiple-ratio \emph{hyperbolic} 0--1 program. Problems of the form \eqref{eq:intro} can also be viewed as a class of set-function optimization problems that seek a subset $S$ of $N$ with its indicator variable $\indicator{S}\in\R^n$, where the $i$-th element of  $\indicator{S}$ is 1 if and only if $i\in S$.

Throughout the paper, we make the following assumptions:

\vspace{1mm}

\textbf{A1:} The denominator is strictly positive for each ratio in (\ref{eq:intro}), i.e.,  $b_{k0}+\sum_{i\in N}b_{ki}x_i>0$ for all $k\in M$ and all $x\in \mathcal{F}$.

\textbf{A2:} $a_{ki}\ge0$, $b_{k0}\ge0$ and $b_{ki}>0$ for all $k\in M$ and $i\in N$.

\vspace{1mm}

\looseness-1Assumption \textbf{A1} is standard in fractional optimization \cite{borrero2016simple,borrero2017fractional,mehmanchi2019fractional}. In particular, it ensures that the objective function is well defined. Assumption \textbf{A2} is not too restrictive as it naturally holds in many application settings, see examples in~\cite{borrero2017fractional}, including those considered in this note, see Section~\ref{sec:application}. Finally, for our results developed in this note we also require an additional relatively mild assumption on the structure of the feasible region, $\mathcal{F}$, in \eqref{eq:intro}; it is formalized in Section~\ref{sec:preliminaries}.


Applications of single- and multiple-ratio fractional 0--1 programs as in \eqref{eq:intro} appear in many diverse areas. For example, \citet{mendez2014branch} discuss an assortment optimization problem under mixed multinomial logit choice models (MMNL). \citet{tawarmalani2002global} consider a facility location problem, where a fixed number of facilities need to be located to service customers locations with the objective of maximizing a market share.  \citet{arora1977set} study a class of set covering problems in the context of airline crew scheduling that aim at covering all flights operated by an airline company. Furthermore, many combinatorial optimization problems can be formulated in the form \eqref{eq:intro} including the minimum fractional spanning tree problem \citep{chandrasekaran1977minimal, ursulenko2013global}, the maximum mean-cut problem \citep{iwano1994new,Radzik1998} and the maximum clique ratio problem \citep{sethuraman2015maximum}. More application examples can be found in the studies by \cite{bertsimas2019identifying,elhedhli2005exact,chen2020approaches}, the recent survey by \citet{borrero2017fractional} and the references therein.

Generally speaking, problem \eqref{eq:intro} is \np-hard even in the case of a single ratio~\cite{hansen1991hyperbolic,prokopyev2005complexity}. Moreover, this problem is even hard to approximate, see, e.g., \cite{prokopyev2005complexity}. Also, \citet{rusmevichientong2014assortment} show that for the unconstrained multi-ratio problem, there is no approximation algorithm with polynomial running time that has an approximation factor better than $\bigO{1/m^{1-\delta}}$ for any $\delta>0$. Other related theoretical computational results are discussed in \cite{prokopyev2005complexity,prokopyev2005multiple}.

\looseness-1Exact solution methods for \eqref{eq:intro} encompass mixed-integer programming reformulations \citep{borrero2016simple,elhedhli2005exact,mehmanchi2019fractional}, branch and bound algorithms \citep{tawarmalani2002global}, and other enumerative methods \citep{borrero2017fractional,granot1976solving, hansen1990boolean}. However, due to \np-hardness of \eqref{eq:intro}, these methods do not scale well when the size of the problem increases. Motivated by these computational complexity considerations, a number of studies rely on approximation schemes and heuristics for solving \eqref{eq:intro}. \citet{rusmevichientong2009ptas}, \citet{Mittal2013general} and \citet{desir2014near} all propose approximation algorithms for assortment optimization under the MMNL model when the number of customer segments, $m$, is fixed. \citet{amiri1999bandwidth} develop a heuristic algorithm based on Lagrangian relaxation in the context of stochastic service systems. \citet{prokopyev2005multiple} present a GRASP-based (Greedy Randomized Adaptive Search) heuristic for solving the cardinality constrained problems. Finally, simple greedy algorithms are also used in the literature~\citep{feldman2015bounding, kunnumkal2015upper}. However, it is often not well understood when such algorithms perform well.

 {\textbf{Contributions and outline.}} The remainder of the note is organized as follows. In Section~\ref{sec:preliminaries}, we overview some necessary preliminaries and formulate our model \eqref{eq:intro} in terms of set functions.

In Section~\ref{sec:submodularity}, we provide the main result of the note that characterizes the \emph{submodularity} of a single ratio.  Submodularity is often a key property for devising approximation algorithms~\citep{fisher1978analysis,nemhauser1978analysis}. If the objective function can be identified as a submodular function, then simple greedy algorithms are capable of delivering high-quality solutions. In fact, it is possible to obtain $(1-e^{-1})$-approximations under a variety of feasible regions -- independently of the number of the ratios, $m$, involved--, thus improving over existing approximation methods for \eqref{eq:intro}.  We also discuss the connections  between submodularity and monotonicity in the context of fractional 0--1 optimization.

In Section~\ref{sec:application}, we consider our theoretical results in the context of two applications -- the assortment optimization  and the $p$-choice facility location problems. For the assortment optimization problem, our results suggest that submodularity is linked to a phenomenon known as \emph{cannibalization} \citep{moorthy1992market}, and naturally arises in several important scenarios. The results can also be applied in the case when there is a fixed cost associated with offering a product in the assortment \citep{Atamturk2017,kunnumkal2019tractable}, which arises, for example, in online advertisement with costs-per-impression. For the $p$-choice facility location problem~\cite{tawarmalani2002global}, we show how to reformulate the original problem in a desirable form that can be then exploited to benefit from the submodularity property. Finally, we conclude the note in Section~\ref{sec:conclusions}.

\section{Preliminaries}\label{sec:preliminaries}

{\textbf{Notation and additional assumption.}}Let $a^{k}=(a_{ki})_{i\in N}$ and $b^k=(b_{ki})_{i\in N\cup\{0\}}$ for all $k\in M$, and for given $a^k\in\R^{n}$ and $b^k\in \R^{n+1}$, define
\[ h(x;a^k,b^k):=\frac{\sum_{i\in N} a_{ki}x_i}{b_{k0}+\sum_{i\in N}b_{ki}x_i}.\]
 Then equation \eqref{eq:intro} can be rewritten as
\begin{equation}\label{eq:hForm}
	\max_{x\in\F}\;\sum_{k\in M}h(x;a^k,b^k).
\end{equation}
This form appears in many applications such as the retail assortment   and the $p$-choice facility location problems. Note that for each $x\in\{0,1\}^n$, there is a unique set $S=\{i\in N :\ x_i=1\}\subseteq N$, and conversely, each $S\subseteq N$ corresponds to an indicator vector $\indicator{S}\in\{0,1\}^n$.  Thus, we can rewrite $h(x;a^k,b^k)$ as a set function
\[h(S;a^k,b^k):=h(\indicator{S};a^k,b^k), \]
and regard $\F$ as the domain of sets, i.e., $\F\subseteq2^N$.
 Thereafter, we may use the vector form and the set form of \eqref{eq:hForm} interchangeably for convenience.

We also need the following additional assumption:

\vspace{1mm}

\textbf{A3}: \F is \emph{downward closed}, i.e., if $S\in \F$ then $T\in \F$ for all $T\subseteq S$.

\vspace{1mm}

\looseness-1We note that many types of feasible regions considered in the literature, such as $\F=2^N$~(unconstrained problem), $\F=\left\{S\subseteq N: \ |S|\leq p\right\}$ for some positive integer $p$ (cardinality constraint) and $\F=\left\{S\subseteq N:\  \sum_{i\in S}w_i\leq c\right\}$ for some weights $w\geq 0$ and $c\geq 0$ (capacity constraint) all satisfy Assumption \textbf{A3}.

\ignore{\subsection{MNL choice model}
Under the MNL choice model, each product $i\in N$ is associated with a weight $v_i> 0$, and the no-purchase option is associated with weight $v_0>0$; these weights encode the relative preferences of a customer for the products. Then, if assortment $S\subseteq N$ is offered, then the probability that a customer chooses product $i\in S$ is given by
$$q(i,S; v)=\frac{v_i}{v_0+\sum_{i\in S}v_i}.$$
The expected revenue from offering assortment $S\subseteq N$ is
$$r(S;v)=\sum_{i\in S}r_iq(i,S;v),$$
and the assortment optimization problem under the MNL model, which selects an assortment that maximizes the expected revenue, is given by
\begin{equation}\label{eq:MNL}\max_{S\in\F}r(S;v).\end{equation}
As mentioned in Section~\ref{sec:intro}, problem \eqref{eq:MNL} can be solved to optimality for a broad class of feasible regions \F, but suffers from limited modeling power.
}

 {\textbf{Submodularity and greedy algorithms.}} A set function $f:2^N\to \R$ from the subsets of $N$ to the real numbers is \emph{submodular} over $\mathcal{F}$ if it exhibits diminishing returns, i.e., $f(S\cup\{i\})-f(S)\geq f(T\cup\{i\})-f(T)$ for all $S\subseteq T\subseteq N\setminus\{i\}$ such that $T \cup\{i\}\in \F$. Equivalently, function $f$ is submodular over $\F$ if
\begin{equation}
\label{eq:submodular} f(S\cup\{i,j\})-f(S\cup\{j\})\leq f(S\cup\{i\})-f(S)
\end{equation}
for all $S\subseteq N$ and $i,j\not\in S$ such that $S\cup\{i,j\}\in \F$.

\looseness-1The greedy algorithm, see its pseudo-code in Algorithm~\ref{alg:greedy}, is a popular choice for tackling monotone submodular maximization problems because it is easy to implement and gives a constant-factor approximation in many cases. When the feasible region is a matroid, the greedy algorithm produces a solution with 1/2 approximation factor; see \cite{fisher1978analysis}. When the feasible region is given by a cardinality constraint,  the approximation ratio can be improved to ($1-e^{-1}$); see \cite{nemhauser1978analysis}. Other ($1-e^{-1}$)-approximation algorithms or near-optimal algorithms have also been provided for other classes of feasible regions over the years \cite{calinescu2011maximizing,kulik2009maximizing,sviridenko2004note}, for example, when $\F$ is defined with a single or multiple capacity constraints.
\begin{algorithm}
	\caption{Greedy Algorithm for Submodular Function Maximization}
	\label{alg:greedy}
\begin{enumerate}
	\item[\emph{Step 1.}] Set $S:=\emptyset$.
	\item[\emph{Step 2.}] Set $A:=\{\ell\in N\setminus S:\ S\cup \{\ell\}\in \F\}$.
	\item[\emph{Step 3.}] If $A\neq \emptyset$, set $\ell^*\in\argmax_{\ell\in A}f(S\cup\{\ell\})$ and $S:=S\cup\{\ell^*\}$. Go to \emph{Step 2}.
	\item[\emph{Step 4.}] Return $S$.
\end{enumerate}
\end{algorithm}


\section{Submodularity of a single ratio and its implications} \label{sec:submodularity}

\subsection{A necessary and sufficient condition}
In this section, we give a necessary and sufficient condition for the submodularity of the function $h(\cdot)$, see Theorem~\ref{prop:iff_cert}. As a direct consequence, if $h(\cdot;a^k,b^k)$ satisfies the condition of Theorem~\ref{prop:iff_cert} for every $k\in M$, then it follows that the fractional 0--1 program \eqref{eq:hForm} admits a constant-factor approximation algorithm. For convenience, we drop the superscript $k$ in $a^k$ and $b^k$ and use the notation $h(\cdot;a,b)$ throughout this section.

We first consider the case where $b_0>0$. The key result of this note is as follows:
\begin{theorem}
	\label{prop:iff_cert}
If $b_0>0$, then function $h(\cdot;a,b)$	is submodular over $\F$ if and only if
	\begin{equation}
		\label{eq:iff cert}
		h(S\cup\{i\};a,b)+h(S\cup\{j\};a,b)\leq \frac{a_i}{b_i}+\frac{a_j}{b_j}
	\end{equation}
	for all $S\subseteq N$, and $i,j\not\in S$ with $i\neq j$ such that $S\cup\{i\}\cup\{j\}\in \F$.
\end{theorem}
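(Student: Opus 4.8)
The plan is to prove the equivalence \emph{pointwise}: I will show that for each fixed triple $(S,i,j)$ with $i\neq j$, $i,j\notin S$, and $S\cup\{i\}\cup\{j\}\in\F$, the submodularity inequality \eqref{eq:submodular} for that triple holds if and only if \eqref{eq:iff cert} holds for the same triple. Since \eqref{eq:submodular} and \eqref{eq:iff cert} range over exactly the same family of triples, such a local equivalence immediately yields the theorem (and handles both directions at once). Throughout I write $A(S):=\sum_{i\in S}a_i$ and $B(S):=b_0+\sum_{i\in S}b_i$, so that $h(S;a,b)=A(S)/B(S)$; the hypothesis $b_0>0$ together with \textbf{A2} guarantees $B(T)>0$ for every $T\subseteq N$ (including $T=\emptyset$), which is precisely what makes every denominator appearing below strictly positive and every division legitimate.

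First I would rewrite submodularity in its symmetric ``four-point'' form. Fix $(S,i,j)$ and abbreviate $A:=A(S)$, $B:=B(S)$. A short computation gives the marginal-gain formula $h(T\cup\{i\})-h(T)=\dfrac{a_iB(T)-b_iA(T)}{B(T)\,(B(T)+b_i)}$, so inequality \eqref{eq:submodular}, i.e.\ $h(S\cup\{i,j\})-h(S\cup\{j\})\le h(S\cup\{i\})-h(S)$, becomes a comparison of two such fractions. Setting $P_i:=a_iB-b_iA$, $P_j:=a_jB-b_jA$, and $Q:=a_ib_j-a_jb_i$, clearing the positive denominators, using $(B+b_j)(B+b_i+b_j)=B(B+b_i)+b_j(2B+b_i+b_j)$, and cancelling the common term $P_iB(B+b_i)$ reduces \eqref{eq:submodular} to the polynomial inequality $P_i\,b_j\,(2B+b_i+b_j)\ge Q\,B\,(B+b_i)$.

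Next I would reduce condition \eqref{eq:iff cert} to a polynomial inequality as well. Using the identity $\dfrac{a_i}{b_i}-h(S\cup\{i\})=\dfrac{P_i}{b_i(B+b_i)}$ and its analogue for $j$, inequality \eqref{eq:iff cert} is equivalent to $\dfrac{P_i}{b_i(B+b_i)}+\dfrac{P_j}{b_j(B+b_j)}\ge 0$, that is, after clearing denominators, to $P_i\,b_j\,(B+b_j)+P_j\,b_i\,(B+b_i)\ge 0$. The concluding step is to check that the two reduced inequalities coincide. The key algebraic identity is $B\,Q=b_jP_i-b_iP_j$, which lets me eliminate the cross term $Q$ from the reduced submodularity inequality; substituting and expanding, the leading terms cancel and leave exactly $b_jP_i(B+b_j)+b_iP_j(B+b_i)\ge0$, which is precisely the reduced form of \eqref{eq:iff cert}.

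The main obstacle will be this last bookkeeping step: carefully expanding the cross-multiplied submodularity inequality and verifying that, after the substitution $B\,Q=b_jP_i-b_iP_j$, every degree-preserving term collapses to the symmetric expression $b_jP_i(B+b_j)+b_iP_j(B+b_i)$. That substitution is what turns the asymmetric-looking submodularity inequality into a form manifestly symmetric in $i$ and $j$ and aligns it with \eqref{eq:iff cert}; once the identity is in hand, the remaining manipulations are routine. I would also note explicitly where $b_0>0$ enters, namely in guaranteeing $B(S)>0$ for the empty and all nonempty $S$ so that $h$ is well defined and all cross-multiplications preserve the inequality direction.
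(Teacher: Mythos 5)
Your proposal is correct and follows essentially the same route as the paper: both proofs fix a triple $(S,i,j)$ and establish, by clearing positive denominators and algebraic rearrangement, that the submodularity inequality \eqref{eq:submodular} for that triple is equivalent to \eqref{eq:iff cert} for the same triple. Your bookkeeping via $P_i=a_iB-b_iA$, $P_j$, $Q=a_ib_j-a_jb_i$ and the identity $BQ=b_jP_i-b_iP_j$ is a tidier way to organize the same computation the paper carries out term by term, and all of your intermediate identities check out.
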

\begin{proof}
	Recall that Assumption \textbf{A2} holds. Thus, the right-hand side of (\ref{eq:iff cert}) is well-defined. Let $S\subseteq N$, let $i,j\not\in S$ with $i\neq j$ satisfying $S\cup\{i\}\cup\{j\}\in \F$, and define $A_S = \sum_{j\in S} a_j$ and $B_S = b_0 + \sum_{j\in S} b_j$. Observe that $h(S;a,b)={A_S}/{B_S}$. From \eqref{eq:submodular} we find that $h(\cdot;a,b)$ is submodular if and only if
	\begin{align*} &\frac{A_S+a_i+a_j}{B_S+b_i+b_j}-\frac{A_S+a_j}{B_S+b_j}\leq \frac{A_S+a_i}{B_S+b_i} -\frac{A_S}{B_S}.
\end{align*}
	Multiplying both sides by $B_S(B_S+b_i+b_j)$, we get the equivalent condition
	\begin{align*}
	&B_S\left(A_S+a_i+a_j\right)-B_S\left(1+\frac{b_i}{B_S+b_j}\right)(A_S+a_j)\\
	&\leq B_S\left(1+\frac{b_j}{B_S+b_i}\right)(A_S+a_i)-(B_S+b_i+b_j)A_S\\
		\Leftrightarrow\;&a_iB_S-b_iB_S\frac{(A_S+a_j)}{B_S+b_j}
		\leq a_iB_S-b_iA_S-b_jA_S+\frac{b_j}{B_S+b_i}B_S(A_S+a_i)\\
		\Leftrightarrow\;&a_iB_S-b_iB_S\frac{(A_S+a_j)}{B_S+b_j}
		\leq a_iB_S-b_iA_S+\frac{b_j}{B_S+b_i}\Big(B_SA_S+B_Sa_i-(B_S+b_i)A_S\Big)\\
		\Leftrightarrow\;&a_iB_S-b_iB_S\frac{(A_S+a_j)}{B_S+b_j}
		\leq a_iB_S-b_iA_S+\frac{b_j}{B_S+b_i}(a_iB_S-b_iA_S).
	\end{align*}
	Adding $b_iA_S +b_iB_S\frac{A_S+a_j}{B_S+b_j}-a_iB_S$ to both sides, we find
	\begin{align*}
	&b_iA_S \leq b_iB_S\frac{(A_S+a_j)}{B_S+b_j}+\frac{b_j}{B_S+b_i}(a_iB_S-b_iA_S)\\
	\Leftrightarrow\; &b_iA_S\left(B_S+b_i\right)\left(B_S+b_j\right) \leq b_iB_S(A_S+a_j)(B_S+b_i)+b_j(B_S+b_j)(a_iB_S-b_iA_S)\\
	\Leftrightarrow\;&b_iA_SB_S^2+b_iA_SB_S(b_i+b_j)+b_i^2b_jA_S \\
	&\leq b_iA_SB_S^2+b_ia_jB_S^2+B_SA_Sb_i^2+a_jb_i^2B_S+a_ib_jB_S^2+a_ib_j^2B_S-b_ib_jA_SB_S-b_ib_j^2A_S.
	\end{align*}
	After rearranging and canceling out some terms in the above expression, we obtain:
	\begin{align*}
&2b_ib_jA_SB_S+b_i^2b_jA_S+b_ib_j^2A_S \leq B_S^2(a_ib_j+a_jb_i)+a_jb_i^2B_S+a_ib_j^2B_S\\
		\Leftrightarrow\;&b_ib_jA_S(b_i+b_j+2B_S)\leq b_ib_j\left(a_j/b_j B_S^2+a_j/b_jb_iB_S+a_i/b_iB_S^2+a_i/b_ib_jB_S\right)\\
		\Leftrightarrow\;&A_S(b_i+b_j+2B_S)\leq a_j/b_j B_S^2+a_j/b_jb_iB_S+a_i/b_iB_S^2+a_i/b_ib_jB_S\\
		\Leftrightarrow\;&(B_S+b_i)(A_S-a_j/b_jB_S)+(B_S+b_j)(A_S-a_i/b_iB_S)\le 0.
	\end{align*}
	
	Finally, dividing by $(B_S+b_i)(B_S+b_j)$ and then adding $a_i/b_i+a_j/b_j$ on both sides, we get
	\begin{align*}\Leftrightarrow\;&\left(\frac{A_S-a_j/b_jB_S}{B_S+b_j}+a_j/b_j\right) + \left(\frac{A_S-a_i/b_iB_S}{B_S+b_i}+a_i/b_i\right)\leq a_i/b_i+a_j/b_j\\
	\Leftrightarrow\;&\frac{A_S+a_j}{B_S+b_j}+\frac{A_S+a_i}{B_S+b_i} \leq a_i/b_i+a_j/b_j,
	\end{align*}
	which is precisely inequality \eqref{eq:iff cert}.
\end{proof}
As we discuss next, submodularity is closely linked to monotonicity.
\subsection{Monotonicity implies submodularity}\label{subsec:monotonic}
\looseness-1The function $h(\cdot;a,b)$ is \emph{monotone nondecreasing} if \begin{equation}
\label{eq:monotonicity}
h(S;a,b)\leq h(S\cup\{j\};a,b)\end{equation} for every set $S$ and $j\not\in S$ such that $S\cup\{j\}\in \F$. Monotonicity is often a prerequisite for greedy algorithms, see, e.g., \cite{nemhauser1978analysis}, to guarantee a constant approximation factor.  Also, it arises naturally in many applications; see Section~\ref{sec:cannibalization} for details. As we show next, monotonicity is a sufficient condition for submodularity.
\begin{proposition}
	\label{prop:monotonicity}
	If function $h(\cdot;a,b)$ is monotone nondecreasing, then $h(\cdot;a,b)$ is submodular.
\end{proposition}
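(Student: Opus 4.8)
The plan is to obtain the proposition directly from the certificate in Theorem~\ref{prop:iff_cert}. Under the standing assumptions \textbf{A1} and \textbf{A3} the empty set is feasible, so $B_\emptyset=b_0>0$ and the hypothesis $b_0>0$ of Theorem~\ref{prop:iff_cert} holds automatically; it therefore suffices to show that monotonicity of $h(\cdot;a,b)$ implies inequality~\eqref{eq:iff cert}. The heart of the argument is to recast the monotonicity condition~\eqref{eq:monotonicity} as a pointwise comparison between the marginal ratio $a_j/b_j$ of an added element and the current value of $h(\cdot;a,b)$.

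Concretely, I would first record, for any $T$ and $j\notin T$ with $T\cup\{j\}\in\F$, the identity
\[
h(T\cup\{j\};a,b)-h(T;a,b)=\frac{B_T\,a_j-A_T\,b_j}{B_T\,(B_T+b_j)},
\]
where $A_T=\sum_{\ell\in T}a_\ell$ and $B_T=b_0+\sum_{\ell\in T}b_\ell$ as in the proof of Theorem~\ref{prop:iff_cert}. Since $b_j>0$ and both $B_T>0$ and $B_T+b_j>0$ by \textbf{A1}, the numerator equals $b_jB_T\big(a_j/b_j-h(T;a,b)\big)$ and the denominator is positive, so the sign of the difference equals the sign of $a_j/b_j-h(T;a,b)$. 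Consequently, monotonicity~\eqref{eq:monotonicity} on the pair $(T,j)$ is equivalent to the elementwise bound $h(T;a,b)\le a_j/b_j$.

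With this reformulation the conclusion follows quickly. Fix $S\subseteq N$ and distinct $i,j\notin S$ with $S\cup\{i\}\cup\{j\}\in\F$; downward closure~\textbf{A3} ensures that $S\cup\{i\}$ and $S\cup\{j\}$ are feasible as well. Applying the bound with $T=S\cup\{i\}$ and the element $j$ gives $h(S\cup\{i\};a,b)\le a_j/b_j$, and applying it with $T=S\cup\{j\}$ and the element $i$ gives $h(S\cup\{j\};a,b)\le a_i/b_i$. Summing these two inequalities yields precisely the certificate~\eqref{eq:iff cert}, whence Theorem~\ref{prop:iff_cert} guarantees that $h(\cdot;a,b)$ is submodular.

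I do not anticipate a serious obstacle; the only points demanding care are getting the direction of the inequality right after clearing the positive denominator in the sign computation, and checking that the intermediate sets $S\cup\{i\}$ and $S\cup\{j\}$ are feasible so that both the monotonicity hypothesis and Theorem~\ref{prop:iff_cert} may be invoked legitimately. The sign analysis is the one place where an error would quietly invalidate the whole argument, so I would verify it carefully.
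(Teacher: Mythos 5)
Your proof is correct and follows essentially the same route as the paper: both reduce the monotonicity condition~\eqref{eq:monotonicity} to the pointwise bound $h(T;a,b)\le a_j/b_j$ (the paper's inequality~\eqref{eq:monotonicity2}) and then verify the certificate~\eqref{eq:iff cert} of Theorem~\ref{prop:iff_cert} by applying that bound twice and summing. The only cosmetic difference is the pairing of indices (you bound $h(S\cup\{i\};a,b)$ by $a_j/b_j$ while the paper bounds it by $a_i/b_i$); both pairings yield the same right-hand side $a_i/b_i+a_j/b_j$, and your explicit checks of $b_0>0$ and feasibility are sound.
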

\begin{proof}
	Condition \eqref{eq:monotonicity} is equivalent to
	\begin{align}
	&\frac{\sum_{i\in S}a_i }{b_0+\sum_{i\in S}b_i}\leq \frac{\sum_{i\in S}a_i +a_j}{b_0+\sum_{i\in S}b_i+b_j}\notag\\
	\Leftrightarrow \;&\left(1+\frac{b_j}{b_0+\sum_{i\in S}b_i}\right)\sum_{i\in S}a_i\leq \sum_{i\in S}a_i +a_j\notag\\
	\Leftrightarrow \;&\frac{\sum_{i\in S}a_i}{b_0+\sum_{i\in S}b_i}\leq \frac{a_j}{b_j}\notag\\
	\Leftrightarrow \;&h(S;a,b)\leq \frac{a_j}{b_j}\label{eq:monotonicity2}
	\end{align}
	for all $S$ and $j\not\in S$. Therefore, if $i,j\not\in S$, then $h(S\cup \{i\};a,b)\leq {a_i}/{b_i}$ and $h(S\cup \{j\};a,b)\leq {a_j}/{b_j}$, and inequality \eqref{eq:iff cert} follows.
\end{proof}
Inequality~\eqref{eq:monotonicity2} needs to hold for every combination of set $S$ and element $i$ for the function to be monotone. Thus, checking monotonicity can be done by verifying that
\begin{equation}\label{eq:sufficient}
\min_{i\in N}\frac{a_i}{b_i} \ge \max_{S\in\F}h(S;a,b),
\end{equation}
and the optimization problem on the right-hand side of \eqref{eq:sufficient} can be solved using existing algorithms for single-ratio fractional optimization; see \cite{megiddo1979combinatorial,Radzik1998}. In fact, in some cases monotonicity can be verified without solving an optimization problem.
\begin{corollary}\label{cor:monotonicity}
	\looseness-1Function $h(\cdot;a,b)$ is monotone nondecreasing (and submodular) over $2^N$ if and only if $$\min_{i\in N}\frac{a_i}{b_i}\geq h(N;a,b).$$
\end{corollary}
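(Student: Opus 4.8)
The plan is to derive both implications from a single elementary fact about two-term mediants: for any $S$ and $j\notin S$, the value $h(S\cup\{j\};a,b)$ is a convex combination of $h(S;a,b)$ and $a_j/b_j$. Concretely, keeping the notation $A_S=\sum_{i\in S}a_i$ and $B_S=b_0+\sum_{i\in S}b_i$ from the proof of Theorem~\ref{prop:iff_cert}, one has $h(S\cup\{j\};a,b)=\frac{B_S}{B_S+b_j}\,h(S;a,b)+\frac{b_j}{B_S+b_j}\,\frac{a_j}{b_j}$, with both weights strictly positive under Assumption \textbf{A2}. Hence $h(S\cup\{j\};a,b)$ always lies between $h(S;a,b)$ and $a_j/b_j$; this is exactly the algebra already underlying \eqref{eq:monotonicity2}.

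For the forward direction I would assume $h(\cdot;a,b)$ is monotone nondecreasing over $2^N$ and apply the characterization \eqref{eq:monotonicity2} with $S=N\setminus\{j\}$, which gives $h(N\setminus\{j\};a,b)\le a_j/b_j$ for every $j\in N$. Since $h(N;a,b)$ lies between $h(N\setminus\{j\};a,b)$ and $a_j/b_j$, the inequality $h(N\setminus\{j\};a,b)\le a_j/b_j$ forces $h(N;a,b)\le a_j/b_j$ as well. Taking the minimum over $j$ then yields $\min_{i\in N}a_i/b_i\ge h(N;a,b)$.

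For the converse I would assume $\min_{i\in N}a_i/b_i\ge h(N;a,b)$ and show that $h(S;a,b)\le h(N;a,b)$ for every $S\subseteq N$; once this is established, for any $j\notin S$ we obtain $h(S;a,b)\le h(N;a,b)\le\min_{i\in N}a_i/b_i\le a_j/b_j$, which is precisely \eqref{eq:monotonicity2}, so $h(\cdot;a,b)$ is monotone, and submodularity then follows from Proposition~\ref{prop:monotonicity}. To prove $h(S;a,b)\le h(N;a,b)$, write $R=N\setminus S$ and $\rho_R=\frac{\sum_{i\in R}a_i}{\sum_{i\in R}b_i}$, and observe that $h(N;a,b)=\frac{B_S}{B_N}\,h(S;a,b)+\frac{\sum_{i\in R}b_i}{B_N}\,\rho_R$ is again a convex combination with positive weights (where $B_N=b_0+\sum_{i\in N}b_i$). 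Because $\rho_R$ is itself a weighted average of the ratios $a_i/b_i$, $i\in R$, with positive weights $b_i$, we have $\rho_R\ge\min_{i\in R}a_i/b_i\ge\min_{i\in N}a_i/b_i\ge h(N;a,b)$; since a convex combination equal to $h(N;a,b)$ has the argument $\rho_R$ no smaller than $h(N;a,b)$, the other argument $h(S;a,b)$ must be no larger.

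The main obstacle is this last step of the converse: the hypothesis only bounds $h(N;a,b)$, yet monotonicity requires controlling $h(S;a,b)$ for every $S$, i.e.\ showing that the maximum of $h(\cdot;a,b)$ over $2^N$ is attained at the full set $N$. The convex-combination identity bridges this gap in one stroke; an equally valid alternative is reverse induction on $|N\setminus S|$, peeling off one element $j\in N\setminus S$ at a time and using $a_j/b_j\ge h(N;a,b)\ge h(S\cup\{j\};a,b)$ together with the two-term mediant relation to conclude $h(S;a,b)\le h(S\cup\{j\};a,b)$. The forward direction, by contrast, is immediate from \eqref{eq:monotonicity2} and the mediant observation.
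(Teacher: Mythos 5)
Your proof is correct, and the backward direction takes a mildly different route from the paper's. The paper rewrites the hypothesis $h(N;a,b)\le a^*/b^*$ (with $a^*/b^*=\min_i a_i/b_i$) as $\sum_{i\in N}(a_i/b_i-a^*/b^*)b_i\le (a^*/b^*)b_0$ and then simply drops the nonnegative terms indexed by $N\setminus S$ to get $h(S;a,b)\le a^*/b^*$ for every $S$, from which \eqref{eq:monotonicity2} follows at once. You instead use the mediant identity $h(N;a,b)=\frac{B_S}{B_N}h(S;a,b)+\frac{\sum_{i\in R}b_i}{B_N}\rho_R$ to show that $N$ is a global maximizer of $h(\cdot;a,b)$ over $2^N$ under the hypothesis, and then chain $h(S;a,b)\le h(N;a,b)\le a_j/b_j$. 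Both arguments rest on the same underlying fact that $h(S;a,b)$ is a positive-weight average of the ratios $a_i/b_i$ (damped by $b_0$), but yours yields the slightly stronger by-product that the maximum of $h$ over $2^N$ is attained at $N$, while the paper's is a one-line term-dropping bound. Your forward direction matches the paper's (which cites \eqref{eq:sufficient}), and you are right that making it airtight requires the convex-combination observation to pass from $h(N\setminus\{j\};a,b)\le a_j/b_j$ to $h(N;a,b)\le a_j/b_j$ --- a step the paper leaves implicit. The only cosmetic caveat is the degenerate case $S=N$ (where $\rho_R$ is a $0/0$ expression), which is trivially fine and worth a parenthetical remark; positivity of the weights is guaranteed since $b_0>0$ in this part of the paper.
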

\begin{proof}
	The forward direction follows directly from \eqref{eq:sufficient}. For the backward direction, let ${a^*}/{b^*}=\min_{i\in N}\{a_i/b_i\}$, and then we find that
	\begin{align*}
	h(N;a,b)\leq \frac{a^*}{b^*}
	&\Leftrightarrow \frac{\sum_{i\in N}a_i}{b_0+\sum_{i\in N}b_i}\leq \frac{ a^*}{b^*}\Leftrightarrow \sum_{i\in N}\left(\frac{a_i}{b_i}-\frac{a^*}{b^*}\right)b_i\le \frac{ a^*}{b^*}b_0.
	\end{align*}
	Since ${a_i}/{b_i}\ge {a^*}/{b^*}$, we find that $\sum_{i\in S}({a_i}/{b_i}-{a^*}/{b^*})b_i\le \left({a^*}/{b^*}\right)b_0$ for any $S\subseteq N$, i.e., $h(S;a,b)\le {a^*}/{b^*}$ for any $S\subseteq N$.
\end{proof}

\subsection{On non-monotone submodular functions}
From Proposition~\ref{prop:monotonicity}, we know that monotonicity implies submodularity. In general, as Example~\ref{ex:counter} below shows, the converse does not hold.
\begin{example}\label{ex:counter}Assume we have three variables, i.e., $N=\{1,2,3\}$, with the setting $(a_1,a_2,a_3)=(3,2,1)$ and $(b_0,b_1,b_2,b_3)=(2,1,1,1)$. Then from Theorem~\ref{prop:iff_cert} we can verify that $h(\cdot;a,b)$ is submodular over $2^N$: since $a_i/b_i+a_j/b_j\geq 3$ for any $i\neq j$ and, for any $S\subseteq N$, $h(S;a,b)\leq h(\{1,2\};a,b)=5/4\leq 3/2$, we find that inequality \eqref{eq:iff cert} holds. However, $h(\{3\};a,b)=1/3<h(\{1,2,3\};a,b)=6/5<h(\{1,2\};a,b)=5/4$, and monotonicity does not hold.\qed
\end{example}	
Nonetheless, if $h(\cdot;a,b)$ is submodular, then it is in fact very close to a nondecreasing function as shown in Proposition~\ref{prop:nonMonotone} below. In particular, if the decision variable with the smallest value $a_i/b_i$ is fixed, then the resulting function is monotone.

Assume for the remainder of this section, without loss of generality, that $a_1/b_1\ge a_2/b_2 \ge \dots \ge a_n/b_n$. Define $\F_1:=\{ S\in \F:\ n\in S \}$ and $\F_2:=\{ S\in \F:\ n\notin S \}$.

\begin{proposition}\label{prop:nonMonotone}
	If $h(\cdot;a,b)$ is submodular over $\F$, then the following holds:\\
	(\textit{i}) \ function $h(\cdot;a,b)$ is monotone nondecreasing over $\F_1$;\\
	(\textit{ii}) for any $S\in\F_2$ and any $j\neq n$ such that $S\cup\{j\}\in \F$ and $S\cup\{n\}\in \F$, we have $h(S\cup\{j\};a,b)\ge h(S;a,b)$.
\end{proposition}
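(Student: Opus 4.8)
The plan is to reduce both parts to a single pointwise bound, namely $h(S;a,b)\le a_j/b_j$. Writing $A_S=\sum_{i\in S}a_i$ and $B_S=b_0+\sum_{i\in S}b_i$, so that $h(S;a,b)=A_S/B_S$, the computation leading to \eqref{eq:monotonicity2} shows that $h(S\cup\{j\};a,b)\ge h(S;a,b)$ if and only if $h(S;a,b)\le a_j/b_j$. Thus each of (i) and (ii) is equivalent to establishing $h(S;a,b)\le a_j/b_j$ for the relevant $S$ and $j$. I would first prove this bound for sets in $\F_2$, which is exactly (ii), and then transfer it to $\F_1$ to obtain (i).

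To prove (ii), I would fix $S\in\F_2$ and $j\ne n$ with $j\notin S$, and split on the size of $h(S;a,b)$ relative to the smallest ratio $a_n/b_n$. If $h(S;a,b)\le a_n/b_n$, then since $a_n/b_n=\min_i a_i/b_i\le a_j/b_j$ the bound is immediate. Otherwise $h(S;a,b)>a_n/b_n$, and a short mediant computation gives $h(S\cup\{n\};a,b)=(A_S+a_n)/(B_S+b_n)>a_n/b_n$ as well. At this point I would invoke the submodularity characterization \eqref{eq:iff cert} at the set $S$ for the pair of indices $\{j,n\}$, which yields $h(S\cup\{j\};a,b)+h(S\cup\{n\};a,b)\le a_j/b_j+a_n/b_n$; combined with $h(S\cup\{n\};a,b)>a_n/b_n$ this forces $h(S\cup\{j\};a,b)<a_j/b_j$. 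Finally, since $h(S\cup\{j\};a,b)=(A_S+a_j)/(B_S+b_j)$ always lies between $h(S;a,b)$ and $a_j/b_j$, the strict inequality $h(S\cup\{j\};a,b)<a_j/b_j$ forces $h(S;a,b)\le a_j/b_j$, which is the desired bound.

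For (i), let $S\in\F_1$ and $j\notin S$, so that $j\ne n$, and set $T:=S\setminus\{n\}\in\F_2$. Here the feasibility needed to apply (ii) to $T$ comes for free: $T\cup\{j\}\cup\{n\}=S\cup\{j\}\in\F$, and by Assumption \textbf{A3} all of its subsets, in particular $T\cup\{j\}$ and $T\cup\{n\}$, lie in \F. Part (ii) then gives $h(T;a,b)\le a_j/b_j$, and since $a_n/b_n\le a_j/b_j$ the elementary fact that $A_T\le (a_j/b_j)B_T$ and $a_n\le (a_j/b_j)b_n$ imply $A_T+a_n\le (a_j/b_j)(B_T+b_n)$ yields $h(S;a,b)=h(T\cup\{n\};a,b)\le a_j/b_j$, i.e.\ monotonicity over $\F_1$.

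The main obstacle is the second case of (ii). Submodularity supplies only diminishing returns, i.e.\ upper bounds on marginal gains, so it cannot by itself certify that a marginal gain is nonnegative; the monotone behaviour must be extracted from the extremal role of the minimum-ratio index $n$. The decisive step is therefore the application of \eqref{eq:iff cert} to the pair $\{j,n\}$, which couples the increment we care about with the worst increment $n$ and pins $h(S\cup\{j\};a,b)$ below $a_j/b_j$. I expect the delicate point to be the feasibility bookkeeping this requires, namely that $j$ and $n$ can be incorporated into $S$ simultaneously so that \eqref{eq:iff cert} is available at $S$ for the pair $\{j,n\}$; once this is in place, the remaining mediant estimates are routine and mirror the manipulations already carried out in \eqref{eq:monotonicity2} and in the proof of Theorem~\ref{prop:iff_cert}.
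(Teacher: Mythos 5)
Your argument is, at its core, the same as the paper's: both parts hinge on applying \eqref{eq:iff cert} to the pair $\{j,n\}$ together with the fact that $h(S\cup\{i\};a,b)$ is a strict convex combination of $h(S;a,b)$ and $a_i/b_i$. The organizational differences are minor: you work directly with the equivalent bound $h(S;a,b)\le a_j/b_j$ from \eqref{eq:monotonicity2} and deduce (\textit{i}) from (\textit{ii}), whereas the paper proves each part separately by contradiction, with (\textit{i}) additionally invoking the diminishing-returns inequality $h(S\cup\{j,n\};a,b)-h(S\cup\{j\};a,b)\le h(S\cup\{n\};a,b)-h(S;a,b)$. Your reduction of (\textit{i}) to (\textit{ii}) is clean, and your feasibility check there ($T\cup\{j\}\cup\{n\}=S\cup\{j\}\in\F$, plus \textbf{A3}) is correct.

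The point you flag as ``delicate'' in (\textit{ii}) is, however, a genuine gap that your proposal does not close: the hypotheses of (\textit{ii}) give $S\cup\{j\}\in\F$ and $S\cup\{n\}\in\F$ but not $S\cup\{j\}\cup\{n\}\in\F$, and Theorem~\ref{prop:iff_cert} supplies \eqref{eq:iff cert} only for triples with $S\cup\{i\}\cup\{j\}\in\F$. This is not mere bookkeeping. Under a cardinality constraint $\F=\{S:\,|S|\le p\}$ with $|S|=p-1$ the required triple is infeasible, and one can build instances where the conclusion actually fails: take $N=\{1,2,3\}$, $p=2$, $a=(10,1,1/2)$, $b_0=1$, $b=(1,1,1)$; submodularity over $\F$ reduces to \eqref{eq:iff cert} at $S=\emptyset$, which holds since $h(\{i\};a,b)=a_i/(1+b_i)<a_i/b_i$, yet for $S=\{1\}$, $j=2$, $n=3$ one has $S\cup\{j\},S\cup\{n\}\in\F$ and $h(\{1,2\};a,b)=11/3<5=h(\{1\};a,b)$. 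So (\textit{ii}) genuinely needs the extra hypothesis $S\cup\{j\}\cup\{n\}\in\F$. You should know that the paper's own proof of (\textit{ii}) silently makes the same assumption---it invokes \eqref{eq:iff cert} at $(S,j,n)$ without justification---so your attempt is no less complete than the published argument; if anything, you correctly located the one step where the statement, as written, requires repair, and your treatment of part (\textit{i}) shows how the feasibility should be supplied.
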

\begin{proof}
	\looseness-1We first prove $h(\cdot;a,b)$ is monotone nondecreasing over $\F_1$ by contradiction. Assume there exists $S$ and $j\neq n$ such that $n\notin S$ and $h(S\cup\{j,n\};a,b)<h(S\cup\{n\};a,b)$. Because $h(S\cup\{j,n\};a,b)$ is a convex combination of $h(S\cup\{n\};a,b)$ and ${a_j}/{b_j}$, we have ${a_j}/{b_j}<h(S\cup\{n\};a,b)$. Since ${a_n}/{b_n}\le {a_j}/{b_j}$, we find that ${a_n}/{b_n}<h(S\cup\{n\};a,b)$. Note that
	\begin{align*}
	h(S\cup\{n\};a,b)&=\left(\frac{b_0+\sum_{i\in S}b_i}{b_0+b_n+\sum_{i\in S}b_i}\right)h(S;a,b)+\frac{b_n}{b_0+b_n+\sum_{i\in S}b_i}\frac{a_n}{b_n}
	\end{align*}
	 is a convex combination of $h(S;a,b)$ and $a_n/b_n$, and since $a_n/b_n<h(S\cup\{n\};a,b)$, it follows that $h(S\cup\{n\};a,b)<h(S;a,b)$. By submodularity, $h(S\cup\{j,n\};a,b)-h(S\cup\{j\};a,b)\le h(S\cup\{n\};a,b)-h(S;a,b)<0$, which indicates that $h(S\cup\{j,n\};a,b)<h(S\cup\{j\};a,b)$. Thus, ${a_n}/{b_n}<h(S\cup\{j\};a,b)$. However, this implies $h(S\cup\{j\};a,b)+h(S\cup\{n\};a,b)>{a_j}/{b_j}+{a_n}/{b_n}$, which is a contradiction based on Theorem~\ref{prop:iff_cert}. Thus, (\textit{i}) holds.
	
	Next, we prove (\textit{ii}) by contradiction. Assume there exists $S$ and $j\neq n$ such that $n\notin S$ and $h(S\cup\{j\};a,b)<h(S;a,b)$. Because $h(S\cup\{j\};a,b)$ is the weighted average of $a_j/b_j$ and $h(S;a,b)$,  we have that ${a_j}/{b_j}<h(S\cup\{j\};a,b)<h(S;a,b)$. Recall that ${a_n}/{b_n}\le {a_j}/{b_j}$. Hence,  ${a_n}/{b_n}<h(S;a,b)$, which implies ${a_n}/{b_n}<h(S\cup\{n\};a,b)$ -- using similar arguments as in the proof of (\textit{i}). Hence, $h(S\cup\{j\};a,b)+h(S\cup\{n\};a,b)>{a_j}/{b_j}+{a_n}/{b_n}$, which contradicts the submodularity of $h(\cdot;a,b)$.	
\end{proof}
\begin{corollary}
	\label{cor:nearmonotonicity1} If either $\F=2^N$ or $\F=\{S\subseteq N:\ |S|\leq p\}$ for any $p\in\{1,\ldots,n-1\}$, then submodularity of $h(\cdot;a,b)$ over $\F$ implies that $h(\cdot;a,b)$ is monotone nondecreasing over $\F_1$ and $\F_2$.
\end{corollary}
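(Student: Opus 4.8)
The plan is to derive both assertions directly from Proposition~\ref{prop:nonMonotone}, whose two parts already do essentially all of the work; the only thing that remains is to verify that the hypotheses of part (\textit{ii}) are automatically satisfied for the two specific feasible regions under consideration. Monotonicity over $\F_1$ requires no additional argument at all: part (\textit{i}) of Proposition~\ref{prop:nonMonotone} states precisely that submodularity of $h(\cdot;a,b)$ (together with Assumption \textbf{A3}) implies that $h(\cdot;a,b)$ is monotone nondecreasing over $\F_1$, with no extra structure on $\F$ needed. Hence I would dispose of the $\F_1$ claim in one line and focus on $\F_2$.

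The substance lies in monotonicity over $\F_2$. By definition this means showing that $h(S;a,b)\le h(S\cup\{j\};a,b)$ for every $S\in\F_2$ and every $j\notin S$ with $S\cup\{j\}\in\F_2$. Since $S\cup\{j\}\in\F_2$ forces $j\neq n$, part (\textit{ii}) of Proposition~\ref{prop:nonMonotone} would deliver exactly this inequality, provided I can certify its remaining hypothesis, namely $S\cup\{n\}\in\F$. My approach is to check this membership by a short cardinality bookkeeping, case by case. When $\F=2^N$ the condition $S\cup\{n\}\in\F$ holds trivially. When $\F=\{S\subseteq N:\ |S|\le p\}$, the membership $S\cup\{j\}\in\F_2\subseteq\F$ gives $|S|+1\le p$, and therefore $|S\cup\{n\}|=|S|+1\le p$ as well, so $S\cup\{n\}\in\F$. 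In either case part (\textit{ii}) applies and yields the desired inequality, establishing monotonicity over $\F_2$.

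I expect the only genuine obstacle to be precisely this last verification. Part (\textit{ii}) is \emph{conditional} on $S\cup\{n\}$ being feasible, and for a general downward-closed $\F$ this can fail: one could have $S\cup\{j\}\in\F$ while $S\cup\{n\}\notin\F$, in which case the argument breaks. The reason the two listed feasible regions go through is that their feasibility is symmetric across elements, so swapping the added element $j$ for $n$ preserves membership; the restriction $p\le n-1$ simply confines attention to the nontrivial cardinality constraints (the case $p=n$ reducing to $\F=2^N$). I would therefore emphasize in the write-up that it is this element-symmetry, rather than submodularity alone, that licenses invoking Proposition~\ref{prop:nonMonotone}(\textit{ii}) on $\F_2$, and that no such shortcut is available for more elaborate constraints such as arbitrary knapsack (capacity) constraints.
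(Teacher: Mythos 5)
Your proof is correct and matches the paper's intent exactly: the paper states Corollary~\ref{cor:nearmonotonicity1} without proof, as an immediate consequence of Proposition~\ref{prop:nonMonotone}, and your argument supplies precisely the missing verification that $S\cup\{j\}\in\F$ forces $S\cup\{n\}\in\F$ for the two element-symmetric feasible regions, so that part (\textit{ii}) applies. Your closing remark about why this can fail for general downward-closed $\F$ (e.g., knapsack constraints) is a correct and worthwhile observation.
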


\setcounter{example}{0}


\begin{example}[Continued] Observe that $h(\cdot;a,b)$ is indeed monotone over $\F_1$, since $h(\{3\};a,b)=1/3$, $h(\{1,3\};a,b)=1$, $h(\{2,3\};a,b)=3/4$ and $h(\{1,2,3\};a,b)=6/5$. Similarly, we can verify that $h(\cdot;a,b)$ is monotone over $\F_2$ since $h(\emptyset;a,b)=0$, $h(\{1\};a,b)=1$, $h(\{2\};a,b)=2/3$ and $h(\{1,2\};a,b)=5/4$.\qed
\end{example}

\subsection{On homogeneous fractional functions}
In this section, we show that the assumption $b_0>0$ is indeed necessary in Theorem~\ref{prop:iff_cert}, as otherwise submodularity does not hold in most practical situations.
Proposition~\ref{prop:homo} below formalizes this statement.

\begin{proposition}\label{prop:homo}
	Assume $b_0=0$. If there exists a feasible set $S$ such that there are at least three distinct values for ${a_i}/{b_i},\ i\in S$, then $h(\cdot;a,b)$ is not submodular.
\end{proposition}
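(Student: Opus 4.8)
The plan is to produce a single explicit violation of the submodularity inequality \eqref{eq:submodular}. Since $b_0=0$, for every nonempty feasible $S$ we have $h(S;a,b)=A_S/B_S$ with $A_S=\sum_{i\in S}a_i$ and $B_S=\sum_{i\in S}b_i$, so, writing $r_i:=a_i/b_i$, the value $h(S;a,b)$ is a $b$-weighted average of the numbers $r_i$, $i\in S$; in particular $h(S;a,b)<\max_{i\in S}r_i$ whenever $S$ contains two distinct ratios. A direct computation gives the marginal-gain identity $h(S\cup\{i\};a,b)-h(S;a,b)=\tfrac{b_i(r_i-h(S;a,b))}{B_S+b_i}$ for $i\notin S$. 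Substituting this into both sides of \eqref{eq:submodular} and clearing the (positive) denominators, I would first record the clean reformulation: for a nonempty base $S$ and $i,j\notin S$, inequality \eqref{eq:submodular} is equivalent to $(r_i-H)(B_S+b_j)+(r_j-H)(B_S+b_i)\ge 0$, where $H:=h(S;a,b)$. This is the homogeneous analogue of the certificate \eqref{eq:iff cert}; note, however, that it is only valid for a nonempty $S$, since $h(\emptyset;a,b)$ is undefined when $b_0=0$.

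Given a feasible set $S$ with three distinct ratios, I would pick three of its elements and relabel them $1,2,3$ so that $r_1>r_2>r_3$. By downward closure (Assumption~\textbf{A3}) every subset of $S$ is feasible, hence $\{1\},\{1,2\},\{1,3\},\{1,2,3\}\in\F$. Taking the \emph{nonempty} base set $\{1\}$, so that $H=r_1$ and $B_{\{1\}}=b_1$, together with the pair $i=2,\ j=3$, the reformulated condition becomes $(r_2-r_1)(b_1+b_3)+(r_3-r_1)(b_1+b_2)\ge 0$. Because $r_2<r_1$ and $r_3<r_1$ while all $b_i>0$ (Assumption~\textbf{A2}), both summands are strictly negative, so the condition fails; equivalently, the marginal value of element $2$ is strictly larger when it is added to $\{1,3\}$ than when it is added to $\{1\}$. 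This single violation shows that $h(\cdot;a,b)$ is not submodular.

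The main thing to get right is the choice of base set. The empty set is unavailable here, so the argument must start from a nonempty $S$; choosing $S=\{1\}$, the singleton carrying the \emph{largest} of the three ratios, is exactly what forces both marginal terms $r_2-H$ and $r_3-H$ to be negative and makes the violation immediate and sign-definite. If one instead insists on working directly from \eqref{eq:submodular} without the reformulation, the same conclusion follows by comparing $h(\{1,2\};a,b)-h(\{1\};a,b)=\tfrac{b_2(r_2-r_1)}{b_1+b_2}<0$ with $h(\{1,2,3\};a,b)-h(\{1,3\};a,b)=\tfrac{b_2(r_2-h(\{1,3\};a,b))}{b_1+b_2+b_3}$, using $h(\{1,3\};a,b)<r_1$ and $b_1+b_2+b_3>b_1+b_2$; this last step needs a short case split on the sign of $r_2-h(\{1,3\};a,b)$, which the reformulation conveniently avoids.
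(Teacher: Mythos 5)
Your proof is correct and uses essentially the same witness as the paper: both take the singleton carrying the largest of the three ratios as the base set and add the other two elements, showing that the resulting submodularity inequality fails strictly. The only difference is packaging --- the paper exhibits the violation by directly comparing weighted differences of the form $\tfrac{b_1}{b_1+b_3}\bigl(\tfrac{a_3}{b_3}-\tfrac{a_1}{b_1}\bigr)$ against their counterparts with enlarged denominators, whereas you route through the homogeneous analogue of the certificate \eqref{eq:iff cert} (valid here since $B_S>0$ for nonempty $S$), which yields the same sign-definite conclusion.
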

\begin{proof}
	Assume without loss of generality that ${a_1}/{b_1}<{a_2}/{b_2}<{a_3}/{b_3}$. Then the following inequality 	\[\frac{b_{1}}{b_{1}+b_{3}}\left(\frac{a_3}{b_3}-\frac{a_1}{b_1}\right)+\frac{b_{2}}{b_{2}+b_{3}}\left(\frac{a_3}{b_3}-\frac{a_2}{b_2}\right)\ge \frac{b_{1}}{b_{1}+b_{2}+b_{3}}\left(\frac{a_3}{b_3}-\frac{a_1}{b_1}\right) +\frac{b_{2}}{b_{1}+b_{2}+b_{3}}\left(\frac{a_3}{b_3}-\frac{a_2}{b_2}\right).\]
	holds since denominators are greater on the right-hand side.
	Subtracting $2\cdot\left({a_3}/{b_3}\right)$ on both sides, we find that
	\[-\frac{a_1+a_3}{b_1+b_3}-\frac{a_2+a_3}{b_2+b_3}\ge -\frac{a_1+a_2+a_3}{b_1+b_2+b_3}-\frac{a_3}{b_3}, \]
	which is equivalent to $h(\{1,3\};a,b)+ h(\{2,3\};a,b)\le h(\{1,2,3\};a,b)+ h(\{3\};a,b)$, violating the definition of  submodularity.
\end{proof}

\section{Applications}\label{sec:application}
In this section, we discuss the implications of our theoretical results in the context of the assortment optimization and the $p$-choice facility location problems.
\subsection{Assortment optimization problem}
In the assortment optimization problem, a firm offers a set of products to utility-maximizing customers. The goal of the firm is to choose an assortment of products that maximizes its expected revenue. It is a core revenue management problem pervasive in practice~\citep{talluri2004revenue}. In this subsection, we mainly consider this problem under the mixed multinomial logit model~(MMNL); see, e.g.,  \citep{bonnet2001assessing,mcfadden2000mixed}.

Formally, let $N$ be the set of products that can be offered to customers. Denote by $r_i$ the revenue perceived by the firm if a customer chooses product $i\in N$. Under the MMNL model, each product $i\in N$ is associated with a random weight $v_{ki}>0$, and the no-purchase option is associated with weight $v_{k0}>0$; these weights encode the relative preferences for the products by a customer of type $k\in M$, i.e., set $M$ describes market segments.

Given the preference weights $v^k$, if assortment $S\subseteq N$ is offered, then the probability that a customer in $k\in M$ chooses product $i\in S$ is given by
\[q(i,S;v^k)=\frac{v_{ki}}{v_{k0}+\sum_{i\in S}v_{ki}}. \]
The conditional expected revenue from offering assortment $S\subseteq N$ is
\[ r(S;v^k)=\sum_{i\in S}r_iq(i,S;v^k). \]
Taking the expectation over the random vector $v^k$, we formulate the assortment optimization problem under the MMNL model as
\begin{equation}
\max_{S\in\F}\E_{ v} \left[r(S;v)\right]=\sum_{k\in M}p_k r(S;v^k),\label{eq:mixedMNL}
\end{equation}
where $p_k$ is the probability of a customer to be in segment $k$ and
each realization of $v$ can be interpreted as the preferences associated with a given customer of customer segment. We assume that the support of $v$ is finite. Hence, \eqref{eq:mixedMNL} can be posed in the form of \eqref{eq:intro}, where $a_{ki}= p_k r_i v_{ki}$, $b_{ki}=v_{ki}$ and $b_{k0}=v_{k0}$ for all $k\in M$ and $i\in N$. Thus, ${a_{ki}}/{b_{ki}}=p_kr_i$.

Finally, we note that $p_k\geq 0$ for each $k\in M$. Hence, for submodularity of the objective function in (\ref{eq:mixedMNL}) it is sufficient to consider the single-ratio functions $r(\cdot;v^k)$, $k\in M$. Therefore, in our discussion below when applying the results of Theorem \ref{prop:iff_cert} and Corollary \ref{cor:monotonicity} (with ratio $a_i/b_i$), the multiplier $p_k$ can be dropped from consideration.

\subsubsection{Cannibalization and submodularity}\label{sec:cannibalization}

Intuitively, in retail assortment problems, monotonicity of the revenue function implies that there is limited cannibalization, i.e., the introduction of a new product $i$ (when feasible) always increases the expected revenue perceived by the firm -- despite that the revenue obtained from previously offered products in $S$ might decrease slightly. To be more specific, this limited cannibalization phenomenon arises in online advertising: the probability that a given customer clicks on an ad is often quite low, and the advertiser usually profits from offering more ads within the limited number of spots on the webpage.

Let $r_{\min}=\min_{i\in N} r_i$ and $r_{\max}=\max_{i\in N} r_i$. By Proposition~\ref{prop:monotonicity} and Corollary~\ref{cor:monotonicity}, we obtain the following results in terms of revenue functions immediately.
 \begin{corollary}
 	If function $r(\cdot;v)$ is monotone nondecreasing, then $r(\cdot;v)$ is submodular.
 \end{corollary}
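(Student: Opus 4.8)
The plan is to recognize that the revenue function $r(\cdot;v)$ is itself a single-ratio function of the type $h(\cdot;a,b)$ already analyzed, so that the corollary becomes an immediate specialization of Proposition~\ref{prop:monotonicity}. Concretely, expanding the definition of $r(S;v)$ yields
$$r(S;v)=\sum_{i\in S}r_i\,\frac{v_i}{v_0+\sum_{\ell\in S}v_\ell}=\frac{\sum_{i\in S}r_iv_i}{v_0+\sum_{\ell\in S}v_\ell},$$
which is exactly $h(S;a,b)$ under the identification $a_i=r_iv_i$ and $b_i=v_i$ for $i\in N$, together with $b_0=v_0$. In particular $a_i/b_i=r_i$, which is the reason the per-ratio thresholds appearing in Proposition~\ref{prop:monotonicity} translate directly into statements about the revenues.

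The next step is to verify that this choice of $(a,b)$ respects Assumptions \textbf{A1}--\textbf{A2}, so that Proposition~\ref{prop:monotonicity} may legitimately be invoked. Since the preference weights satisfy $v_i>0$ and $v_0>0$, we get $b_i>0$ and $b_0\ge 0$; and since revenues are nonnegative, $a_i=r_iv_i\ge 0$. The denominator $v_0+\sum_{\ell\in S}v_\ell$ is strictly positive for every feasible $S$, so \textbf{A1} holds as well. With these checks in place, the hypothesis that $r(\cdot;v)$ is monotone nondecreasing is precisely the monotonicity hypothesis of Proposition~\ref{prop:monotonicity} applied to $h(\cdot;a,b)$, and the conclusion that $h(\cdot;a,b)=r(\cdot;v)$ is submodular follows at once.

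There is essentially no obstacle beyond this bookkeeping of matching notation; the substantive content was already established in Proposition~\ref{prop:monotonicity}, and the corollary merely transports it into the language of revenue functions for the assortment setting. As a practical remark I would add that, in the unconstrained case $\F=2^N$, Corollary~\ref{cor:monotonicity} supplies a directly checkable criterion for the monotonicity hypothesis, namely $\min_{i\in N}r_i\ge r(N;v)$, so that submodularity of $r(\cdot;v)$ can be certified without solving any auxiliary optimization problem.
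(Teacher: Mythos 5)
Your proposal is correct and matches the paper's reasoning: the paper likewise obtains this corollary ``immediately'' from Proposition~\ref{prop:monotonicity} via the identification $a_i=r_iv_i$, $b_i=v_i$, $b_0=v_0$ (so that $a_i/b_i=r_i$), with the checks of Assumptions \textbf{A1}--\textbf{A2} left implicit. Your explicit verification of those assumptions and the closing remark about Corollary~\ref{cor:monotonicity} are consistent with what the paper does.
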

\begin{corollary}
	Function $r(\cdot;v)$ is monotone nondecreasing (and submodular) over $2^N$ if and only if $r_{min}\geq r(N;v)$.
\end{corollary}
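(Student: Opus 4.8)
The plan is to treat this corollary as a direct specialization of Corollary~\ref{cor:monotonicity} to the revenue function, so that the entire argument reduces to a translation of notation. First I would set up the dictionary between the abstract single-ratio function $h(\cdot;a,b)$ and the revenue function $r(\cdot;v)$. Recall from the reduction of \eqref{eq:mixedMNL} to the form \eqref{eq:intro} that a single market segment corresponds to the choice $a_i=r_iv_i$, $b_i=v_i$ and $b_0=v_0$. This gives $h(S;a,b)=r(S;v)$ for every $S\subseteq N$ and, most importantly, $a_i/b_i=r_i$ for each $i\in N$. (The segment probability $p_k$ has already been dropped here; since $p_k\ge 0$ multiplies the whole ratio, it affects neither monotonicity nor submodularity, and it cancels from both sides of the inequality below.)

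With this identification in hand, the two quantities appearing in Corollary~\ref{cor:monotonicity} translate immediately: $\min_{i\in N}a_i/b_i=\min_{i\in N}r_i=r_{\min}$ and $h(N;a,b)=r(N;v)$. Substituting these into the statement of Corollary~\ref{cor:monotonicity} turns the abstract condition $\min_{i\in N}a_i/b_i\ge h(N;a,b)$ into exactly $r_{\min}\ge r(N;v)$, and the parenthetical equivalence of monotonicity and submodularity carries over verbatim via Proposition~\ref{prop:monotonicity}.

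Finally I would simply invoke Corollary~\ref{cor:monotonicity} to conclude that $r(\cdot;v)$ is monotone nondecreasing (equivalently, submodular) over $2^N$ if and only if $r_{\min}\ge r(N;v)$, which is precisely the claim. There is no genuine analytical obstacle here: the only point requiring care is verifying that the parameter substitution is exact, i.e., that the ratio $a_i/b_i$ collapses to $r_i$ and that $h(N;a,b)$ coincides with $r(N;v)$. Once this bookkeeping is confirmed, the result follows with no further computation.
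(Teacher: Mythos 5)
Your proposal is correct and follows exactly the route the paper intends: the paper states this corollary as an immediate consequence of Corollary~\ref{cor:monotonicity} under the substitution $a_i=r_iv_i$, $b_i=v_i$, $b_0=v_0$ (with the nonnegative factor $p_k$ dropped), which is precisely the dictionary you set up. No gaps.
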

 \subsubsection{Revenue spread, no-purchase probability and submodularity}
When the revenues $r$ of all products are identical, assortment optimization problems are known to be submodular maximization problems \citep{Atamturk2017,desir2015capacity}. Intuitively, one would expect that if the revenues are sufficiently close (but not identical), then submodularity should be preserved. Proposition~\ref{prop:assortment_dispersion} formalizes this intuition: if the gap between the largest and the smallest revenues is bounded above by the odds of no-purchase, then the function is nondecreasing and submodular.
\begin{proposition}
	\label{prop:assortment_dispersion}
	If
	\begin{equation}\label{eq:revenue} \frac{r_{\max}-r_{\min}}{r_{\min}} \leq \min\limits_{S\in \F}\frac{1-q(S;v)}{q(S;v)}, \end{equation}
	then $r(\cdot;v)$ is nondecreasing and submodular, where $r_{\max}$ and $r_{\min}$ are the largest and smallest revenues, respectively, and $q(S;v)=\sum_{i\in S}q(i,S;v)$ is the probability that an item is purchased.
\end{proposition}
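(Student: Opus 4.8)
The plan is to reduce submodularity to monotonicity and then verify monotonicity directly from hypothesis \eqref{eq:revenue}. Recall from the MMNL reduction leading to \eqref{eq:mixedMNL} that, after dropping the common multiplier $p_k$, the single ratio associated with a customer type $v$ satisfies $h(S;a,b)=r(S;v)$ with $a_i/b_i=r_i$, so that $\min_{i\in N} a_i/b_i = r_{\min}$. By Proposition~\ref{prop:monotonicity}, it suffices to show that $r(\cdot;v)$ is monotone nondecreasing; once monotonicity is established, submodularity follows immediately. Monotonicity in turn, via the characterization \eqref{eq:monotonicity2} together with the sufficient condition \eqref{eq:sufficient}, is guaranteed as soon as $r(S;v)\le r_{\min}$ for every $S\in\F$.

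The crux is therefore a single uniform bound on the expected revenue. Writing $r(S;v)=\sum_{i\in S} r_i\, q(i,S;v)$ and bounding each revenue by $r_i\le r_{\max}$, I obtain the key inequality $r(S;v)\le r_{\max}\sum_{i\in S} q(i,S;v)=r_{\max}\,q(S;v)$. This collapses the whole task into showing $r_{\max}\,q(S;v)\le r_{\min}$ for all $S\in\F$, which is purely a matter of rewriting the hypothesis.

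Finally, I would manipulate condition \eqref{eq:revenue}. Since $r_{\min}>0$ and $q(S;v)\in(0,1)$, cross-multiplying $(r_{\max}-r_{\min})/r_{\min}\le (1-q(S;v))/q(S;v)$ gives, after cancellation, exactly $r_{\max}\,q(S;v)\le r_{\min}$; because \eqref{eq:revenue} takes the minimum over $S\in\F$, this holds for every feasible $S$. Chaining the two bounds yields $r(S;v)\le r_{\max}\,q(S;v)\le r_{\min}$ for all $S\in\F$, establishing monotonicity and hence, by Proposition~\ref{prop:monotonicity}, submodularity. The only genuinely delicate point is recognizing that \eqref{eq:revenue} is algebraically equivalent to the clean bound $r_{\max}\,q(S;v)\le r_{\min}$, with the no-purchase odds $(1-q(S;v))/q(S;v)$ furnishing precisely the slack needed to absorb the revenue spread; the remaining steps are routine.
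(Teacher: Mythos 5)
Your proof is correct and follows essentially the same route as the paper: rewrite \eqref{eq:revenue} as $r_{\max}\,q(S;v)\leq r_{\min}$, chain it with $r(S;v)\leq r_{\max}\,q(S;v)$ to get $r(S;v)\leq r_{\min}\leq r_i$, and invoke condition \eqref{eq:monotonicity2} together with Proposition~\ref{prop:monotonicity}. No substantive differences.
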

\begin{proof}
	Equation \eqref{eq:revenue} can be rewritten as $r_{\max}q(S;v)\leq r_{\min}$ for all $S\in \mathcal{F}$. Since for any $S$ and $i\not\in S$ it follows that $r(S;v)\leq r_{\max}q(S;v)\leq r_{\min}\leq r_i$, we find that \eqref{eq:monotonicity2} is satisfied and the function $r(\cdot;v)$ is monotone submodular.
\end{proof}

\looseness-1Proposition~\ref{prop:assortment_dispersion} provides us with additional intuition on the industries in which the expected revenues are submodular functions of the assortment offered. In the online advertisement, where the revenues obtained from clicks are usually similar and the odds of no-purchase are high, we would expected to obtain submodular revenue functions. In a \emph{monopoly}, the firm offering the assortment would have a large flexibility in setting prices (resulting in a large revenue spread) and the odds of no-purchase would be low (due to the lack of competing alternatives), resulting in a revenue function that is not submodular. In contrast, in a \emph{competitive market}, the odds of no-purchase would be larger and firms have little or no control over prices (and if the values $r_i$ are interpreted as profits instead of revenues, the spread would typically be low), resulting in submodular revenue functions.

\looseness-1From Proposition~\ref{prop:assortment_dispersion} we also gain insights on the differences between revenue management in the airline and hospitality industries, two industries that are often treated as equivalent in the literature \citep{talluri2006theory}. In the hospitality industries, no-purchase odds can be high as shown by the relatively low occupancy rates -- 66.1\% in the US \citep{OccupancyUS}
in 2018; in addition, revenue differences between products are often due to ancillary charges (e.g., breakfast, non-refundable, long stay), which account for a small portion of the baseline price for a room. In such circumstances we would expect revenue functions to be submodular and simple greedy heuristics to perform well. In contrast, in the airline industries no-purchase odds are often smaller -- the load factor was 86.1\% in the US in 2018 \citep{LoadFactor} --, and air fares can change dramatically depending on the conditions. Thus, in the airline industry we would expect to encounter non-submodular revenue functions, and simple heuristics may be inadequate.

\subsubsection{On the greedy algorithm and revenue-ordered assortments}
Revenue-ordered assortments are optimal for unconstrained assortment optimization under the MNL model, and tend to perform well in practice \citep{talluri2004revenue}. \citet{berbeglia2017assortment} study the revenue-ordered assortments under the general discrete choice model and prove performance guarantees.
\begin{proposition}[\citet{berbeglia2017assortment}] \label{prop:revenueOrdered}Revenue-ordered assortments are a  $\frac{1}{1+\log\left(\frac{r_{\max}}{r_{\min}}\right)}$-approxi\-mation for the  {unconstrained} assortment optimization problem under the MMNL choice model, where $r_{\max}$ and $r_{\min}$ are the largest and smallest revenues, respectively.
\end{proposition}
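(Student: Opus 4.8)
The plan is to reconstruct the threshold (layer-cake) argument of \citet{berbeglia2017assortment}, which reduces the guarantee to an elementary estimate of the integral $\int_{r_{\min}}^{r_{\max}} dp/p = \log(r_{\max}/r_{\min})$. Throughout, write $R(S) := \E_v[r(S;v)] = \sum_{i\in S} r_i\,\bar q(i,S)$, where $\bar q(i,S) := \E_v[q(i,S;v)] = \sum_{k\in M} p_k\, q(i,S;v^k)$ is the probability that a random customer buys product $i$ from assortment $S$, and let $\bar q(S) := \sum_{i\in S}\bar q(i,S)$ be the total purchase probability. For a price level $p>0$, the revenue-ordered assortment at threshold $p$ is $S_{\ge p} := \{i\in N : r_i\ge p\}$; let $A$ denote the revenue of the best revenue-ordered assortment and let $S^*$ be an optimal unconstrained assortment, so that $\mathrm{OPT}=R(S^*)$ and $A\le \mathrm{OPT}$.

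First I would decompose $\mathrm{OPT}$ into price layers. Writing $r_i = \int_0^{r_i} dp$ and exchanging the sum and the integral gives
\[ \mathrm{OPT} = \int_0^\infty g(p)\,dp, \qquad g(p) := \sum_{i\in S^*:\, r_i\ge p} \bar q(i,S^*), \]
so that $g(p)$ is the probability that a customer offered $S^*$ purchases a product of revenue at least $p$. The function $g$ is nonincreasing, vanishes for $p>r_{\max}$, and equals the constant $\bar q(S^*)$ on $(0,r_{\min}]$, since every product has revenue at least $r_{\min}$.

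The crux is the pointwise bound $g(p)\le A/p$. Set $S^*_p := \{i\in S^*: r_i\ge p\}\subseteq S_{\ge p}$. Since every product in $S_{\ge p}$ earns revenue at least $p$,
\[ A \ge R(S_{\ge p}) = \sum_{i\in S_{\ge p}} r_i\,\bar q(i,S_{\ge p}) \ge p\,\bar q(S_{\ge p}). \]
It then remains to show $\bar q(S_{\ge p})\ge g(p)$, which is where the structure of the MMNL model enters, through two monotonicity properties that hold per segment for the MNL model and are preserved under the mixture: (i) \emph{substitutability} -- shrinking the offered set weakly raises the choice probability of each retained product, so $\bar q(i,S^*_p)\ge \bar q(i,S^*)$ and hence $\bar q(S^*_p)\ge g(p)$; and (ii) \emph{monotonicity of the total purchase probability} -- enlarging the offered set weakly raises $\bar q(\cdot)$, so $\bar q(S_{\ge p})\ge \bar q(S^*_p)$ because $S^*_p\subseteq S_{\ge p}$. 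Chaining these inequalities yields $A\ge p\,g(p)$. Applying the same two facts at $p=r_{\min}$, where $S_{\ge r_{\min}}=N\supseteq S^*$, gives $A\ge R(N)\ge r_{\min}\bar q(N)\ge r_{\min}\bar q(S^*)$.

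Finally I would integrate. Splitting at $r_{\min}$ and using the two bounds just obtained,
\[ \mathrm{OPT} = \int_0^{r_{\min}} g(p)\,dp + \int_{r_{\min}}^{r_{\max}} g(p)\,dp \le r_{\min}\,\bar q(S^*) + \int_{r_{\min}}^{r_{\max}}\frac{A}{p}\,dp \le A + A\log\!\Big(\frac{r_{\max}}{r_{\min}}\Big), \]
whence $A \ge \mathrm{OPT}/\big(1+\log(r_{\max}/r_{\min})\big)$, as claimed. The only non-routine step is establishing (i) and (ii): verifying that the MMNL choice probabilities satisfy substitutability and that the purchase probability is monotone in the offered assortment. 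Both reduce to the per-segment MNL identities $q(i,S;v^k)=v_{ki}/(v_{k0}+\sum_{j\in S} v_{kj})$ and $q(S;v^k)=1-v_{k0}/(v_{k0}+\sum_{j\in S}v_{kj})$, which are respectively nonincreasing and nondecreasing in $S$, followed by averaging over $k$ with the nonnegative weights $p_k$. The layer-cake identity and the logarithmic integral are then the routine glue holding the estimate together.
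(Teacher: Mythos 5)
The paper does not actually prove this proposition---it is stated as a quotation of a result from \citet{berbeglia2017assortment}, with no internal argument to compare against---so the relevant question is only whether your reconstruction of the cited proof is sound, and it is. The layer-cake identity $\mathrm{OPT}=\int_0^\infty g(p)\,dp$, the pointwise bound $A\ge p\,g(p)$, and the split of the integral at $r_{\min}$ are assembled correctly, and the two monotonicity facts you invoke (choice probabilities of retained products do not decrease when the assortment shrinks; total purchase probability does not decrease when it grows) do hold for MMNL, since both follow from the explicit per-segment MNL formulas $q(i,S;v^k)=v_{ki}/(v_{k0}+\sum_{j\in S}v_{kj})$ and $q(S;v^k)=1-v_{k0}/(v_{k0}+\sum_{j\in S}v_{kj})$ and are preserved by averaging with the nonnegative weights $p_k$. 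Your properties (i) and (ii) are precisely the ``regularity'' condition under which \citet{berbeglia2017assortment} prove the bound for general discrete choice models, so your argument is a faithful specialization of theirs rather than a genuinely different route.
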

Thus, the quality of revenue-ordered assortments depend on the ratio $r_{max}/r_{min}$; in particular, if $r_{max}/r_{min}=1$, then the revenue-ordered assortments strategy delivers an optimal solution, and the guarantee degrades as the value of the ratio increases.

From Proposition~\ref{prop:assortment_dispersion}, we can also obtain guarantees depending on the ratio $r_{max}/r_{min}$. Define:
\begin{equation}\label{eq:maxAlpha}\alpha(S)=\max_{k\in M}q(S; v^k)=\max_{k\in M}\sum_{i\in S}q(i,S;v^k)\end{equation}
as the maximum probability that a customer from any segment purchases an item when assortment $S$ is offered.
\begin{proposition}\label{prop:ratio}\label{p} If $\F=\{ S:\ |S|\le p \}$ for some positive integer $p$ and $r_{max}/r_{min}\le 1+\frac{1-\alpha(S)}{\alpha(S)}$ for all $S\in\F$, then Algorithm~\ref{alg:greedy} delivers a $(1-1/e)$-optimal solution for the assortment optimization problem under the MMNL choice model.
\end{proposition}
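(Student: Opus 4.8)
The plan is to show that, under the stated hypotheses, the objective $\E_v[r(\cdot;v)]=\sum_{k\in M}p_k\,r(\cdot;v^k)$ of problem~\eqref{eq:mixedMNL} is monotone nondecreasing and submodular over $\F$; once this is established, the $(1-1/e)$ guarantee is immediate from the classical analysis of the greedy algorithm on a cardinality-constrained monotone submodular maximization problem~\cite{nemhauser1978analysis}. Since $p_k\ge0$ and nonnegative combinations of monotone submodular functions are again monotone submodular, it suffices to prove that each single-segment revenue function $r(\cdot;v^k)$ is monotone nondecreasing and submodular, and for this I would invoke Proposition~\ref{prop:assortment_dispersion} applied separately to each preference vector $v^k$.

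The core step is to deduce the per-segment hypothesis of Proposition~\ref{prop:assortment_dispersion} from the aggregate assumption $r_{\max}/r_{\min}\le 1+\frac{1-\alpha(S)}{\alpha(S)}$. I would fix a segment $k\in M$ and an arbitrary $S\in\F$, and use that $q(S;v^k)\le\alpha(S)$, which holds by the definition of $\alpha(S)$ in~\eqref{eq:maxAlpha} as the maximum purchase probability across segments. Because the map $t\mapsto (1-t)/t=1/t-1$ is strictly decreasing for $t>0$, this inequality reverses, and combined with the hypothesis it gives
\[\frac{1-q(S;v^k)}{q(S;v^k)}\ge \frac{1-\alpha(S)}{\alpha(S)}\ge \frac{r_{\max}}{r_{\min}}-1=\frac{r_{\max}-r_{\min}}{r_{\min}}.\]
Taking the minimum over $S\in\F$ on the left-hand side preserves the inequality (the right-hand side does not depend on $S$), so the hypothesis of Proposition~\ref{prop:assortment_dispersion} holds for the preference vector $v^k$.

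By Proposition~\ref{prop:assortment_dispersion}, it then follows that $r(\cdot;v^k)$ is monotone nondecreasing and submodular over $\F$ for every $k\in M$. Summing against the weights $p_k\ge0$ yields that the objective of~\eqref{eq:mixedMNL} is monotone nondecreasing and submodular. Finally, since $\F=\{S:\ |S|\le p\}$ is a cardinality constraint, Algorithm~\ref{alg:greedy} returns a $(1-1/e)$-approximate solution by~\cite{nemhauser1978analysis}, which completes the argument.

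I expect no serious obstacle here: the only subtlety is the direction-reversing step, where one must correctly exploit that $\alpha(S)$ is the \emph{maximum} purchase probability across segments, so that it produces the smallest (hence binding) value of $(1-q)/q$, and verify that the condition of Proposition~\ref{prop:assortment_dispersion} is required to hold for each segment individually rather than only in aggregate. Everything else reduces to previously established results.
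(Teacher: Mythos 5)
Your proof is correct and follows exactly the route the paper intends: the paper states Proposition~\ref{prop:ratio} without a written proof, presenting it as a direct consequence of Proposition~\ref{prop:assortment_dispersion} (applied segment by segment via $q(S;v^k)\le\alpha(S)$) combined with the Nemhauser--Wolsey--Fisher $(1-1/e)$ guarantee for cardinality-constrained monotone submodular maximization. Your write-up supplies precisely the omitted details, including the correct handling of the direction-reversing step and the nonnegative aggregation over segments.
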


\looseness-1Unlike Proposition~\ref{prop:revenueOrdered}, we impose a condition on the ratio $r_{max}/r_{min}$ in Proposition~\ref{prop:ratio}; however, if such condition is satisfied, then we obtain an approximation guarantee of $(1-1/e)\approx 0.63$ for the more general assortment optimization problem under a cardinality constraint. 

\looseness-1Finally, we also point out that \citet{rusmevichientong2014assortment} prove that if customers are \emph{value conscious}, i.e., ${v}_1\leq  v_2\leq\ldots\leq  v_n$ and $r_1 v_1\geq r_2 v_2\geq \ldots\geq r_n v_n$ for all realizations of ${v}$, then the revenue ordered assortments are optimal for the unconstrained and cardinality constrained cases. It is easy to check that in this case the solutions obtained from the greedy algorithm correspond precisely with the revenue ordered assortments. Thus, Algorithm~\ref{alg:greedy} delivers optimal solutions as well.

\ignore{
\subsubsection{On submodularity and the Markov chain choice model}
\citet{blanchet2016markov} propose the Markov chain choice model as a generalization of several existing choice models in the literature that admits tractable solution techniques. Moreover, they show that under some conditions, the Markov chain choice model can approximate well the MMNL choice model (and thus any choice model). As we now discuss, the approximation quality of the Markov chain choice model is closely linked to the submodularity of the revenue function.
\begin{proposition}[\citet{blanchet2016markov}]
	\label{prop:MCchoice} For any $S\subseteq N$ and $j\in S$, let $q_{mc}(i,S)$ as the probability that a customer selects item $i\in N$ given that assortment $S$ is offered under the Markov chain choice model. Then
	$$\big(1-\alpha(N\setminus S)^2\big)q_{mc}(i,S)\leq \mathbb{E}_{ v}\left[q(i,S; v)\right]\leq \left(1+\tau\frac{\alpha(N\setminus S)}{1-\alpha(N\setminus S)}\right)q_{mc}(i,S), $$
	where $\alpha(S)$ is given as in \eqref{eq:maxAlpha} and $\tau$ is a parameter that depends on the Markov chain choice model.
\end{proposition}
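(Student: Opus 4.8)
The plan is to reconstruct the approximation argument of \citet{blanchet2016markov} by comparing the two choice models assortment-by-assortment through their substitution (spillover) dynamics. First I would fix the calibration that ties the Markov chain model to the MMNL model. The arrival probabilities are set to the full-assortment choice probabilities, $\lambda_i=\E_v[q(i,N;v)]$, and the transition probabilities $\rho_{ij}$ are chosen to reproduce the MMNL model's first-order (leave-one-out) substitution rates, so that withholding a \emph{single} product redistributes its demand identically under both models. Under this calibration the two models agree exactly on the full assortment $N$ and to first order, and the entire discrepancy in $q(i,S)$ for a general $S$ is attributable to higher-order substitution among the withheld products $N\setminus S$.

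The second step is to obtain an exact expression for $q_{mc}(i,S)$ from absorbing-Markov-chain theory: treating the offered products $S$ together with the no-purchase state as absorbing and the withheld products $N\setminus S$ as transient, the absorption probability at $i\in S$ decomposes as a direct-arrival term plus a spillover term driven by the expected number of visits to the transient states, i.e. $q_{mc}(i,S)=\lambda_i+\sum_{j\notin S}\theta_j\,\rho_{ji}$, where $\theta=\lambda_{N\setminus S}(I-Q)^{-1}$ and $Q$ is the sub-stochastic matrix of transitions restricted to $N\setminus S$. I would then write the corresponding exact decomposition of $\E_v[q(i,S;v)]$ segment by segment, exploiting the fact that the MMNL choice probabilities satisfy their own exact redistribution identity as products are removed from the offer set.

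The third step is the term-by-term comparison. The zeroth-order term $\lambda_i$ matches by calibration of $\lambda$, and the single-removal spillover matches by calibration of $\rho$; since first-order effects are matched exactly, the leading residual is of second order. The quantitative lever is that the transient mass being rerouted is controlled by $\alpha(N\setminus S)$: because the no-purchase state is reachable from every withheld product, $Q$ is strictly sub-stochastic and each additional substitution pass multiplies the surviving mass by at most $\alpha(N\setminus S)<1$, so the Neumann series $(I-Q)^{-1}=\sum_{t\ge0}Q^t$ converges geometrically. Discarding the rerouted mass beyond two passes yields the parameter-free lower factor $1-\alpha(N\setminus S)^2$, while bounding the geometric tail $\sum_{t\ge1}\alpha(N\setminus S)^t=\frac{\alpha(N\setminus S)}{1-\alpha(N\setminus S)}$ against the true multi-step substitution yields the upper factor $1+\tau\frac{\alpha(N\setminus S)}{1-\alpha(N\setminus S)}$, with $\tau$ absorbing the worst-case ratio between the MMNL model's exact multi-step substitution and the chain's compounded single-step substitution.

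I expect the main obstacle to be the third step: making the comparison between the MMNL model's exact higher-order substitution and the Markov chain's \emph{compounded} first-order substitution rigorous and uniform over all segments $k\in M$. The subtlety is that the true model's multi-step substitution is not literally a product of independent single-step rates, so one must show that the aggregate deviation is nonetheless sandwiched by the geometric bound in $\alpha(N\setminus S)$ and identify precisely which model-dependent constant plays the role of $\tau$. The structural facts I would lean on to close this gap are the monotonicity of the choice probabilities in the offered set and the strict sub-stochasticity of $Q$, both of which follow from $\alpha(N\setminus S)<1$.
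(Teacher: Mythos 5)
You should first know that this note contains no proof of this statement at all: Proposition~\ref{prop:MCchoice} is imported verbatim from \citet{blanchet2016markov} (in the source it even sits inside a commented-out block), so your reconstruction has to be judged against the original argument of that paper, not against anything here. Your first two steps do match the architecture of Blanchet et al.: the calibration $\lambda_i=\mathbb{E}_v[q(i,N;v)]$ with transition probabilities $\rho_{ij}$ chosen to replicate single-removal redistribution, and the absorbing-chain decomposition of $q_{mc}(i,S)$ via the Neumann series $(I-Q)^{-1}=\sum_{t\ge 0}Q^t$, are exactly the right setup.

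The genuine gap is your third step, and it is not a technicality you can defer. First, the claim that ``discarding the rerouted mass beyond two passes yields the factor $1-\alpha(N\setminus S)^2$'' is asserted rather than derived: $\alpha(N\setminus S)$ is a quantity of the \emph{true} MMNL model (the maximum over segments of the purchase probability when $N\setminus S$ is offered), so to say that each substitution pass retains at most $\alpha(N\setminus S)$ of the transient mass you must tie the row sums of the calibrated matrix $Q$ back to the MMNL purchase probabilities segment by segment, via the calibration identity --- a link your sketch never establishes. Second, and more seriously, your $\tau$ is circular: you define it as ``the worst-case ratio between the MMNL model's exact multi-step substitution and the chain's compounded single-step substitution,'' but bounding precisely that ratio \emph{is} the content of the upper inequality; with $\tau$ so defined, the inequality holds by fiat and says nothing unless you separately prove the ratio is finite and, as the statement requires, expressible as a parameter of the Markov chain model alone. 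The structural facts you propose to lean on (monotonicity of $q(i,\cdot)$ in the offered set, strict sub-stochasticity of $Q$) give qualitative convergence of the Neumann series but cannot by themselves produce the two-sided sandwich with these specific constants: the original proof requires additional random-utility-specific lemmas comparing the removal of a \emph{set} of products to iterated single removals (multi-step substitution under MMNL is not a compounding of independent one-step rates, as you yourself note). In short, you have correctly reconstructed the scaffolding, but the quantitative heart of the theorem --- where both constants actually come from --- remains open in your plan.
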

In other words, the Markov chain choice model is able to accurately approximate the MMNL model when the probabilities of a purchase $\alpha(S)$ are sufficiently small. However, from Propositions~\ref{prop:assortment_dispersion} and \ref{prop:ratio} we see that if this is the case, then all functions $r(S;v)$ are submodular. Finally, note that the best known approximation algorithms for constrained assortment optimization under the Markov chain choice model have approximation guarantees of at most $1/2$ \citep{desir2015capacity}.

Therefore, we conclude that in constrained assortment optimization problems under the MMNL choice model in which the Markov chain choice model is able to accurately approximate the MMNL, tackling directly the MMNL choice model via the greedy  algorithm may yields better bounds than using the Markov chain proxy.
}

\subsection{$p$-choice facility location problem}
Facility location problems deal with deciding where to locate facilities across a finite set of feasible points, taking into account the needs of customers to be served in such a way that a given economic index is optimized~\cite{barros2013discrete}. Submodularity often arises in facility location problems; see \cite{atamturk2012conic, du2012primal, li2015improved,ortiz2017formulations}. In this subsection, we consider a particular class of facility location problems with a fractional 0--1 objective function, referred to as the $p$-choice facility location problem, which is considered in \cite{tawarmalani2002global}. In the $p$-choice facility location problem, a decision-maker has to decide where to locate $p$ facilities in $n$ possible locations to service $m$ demand points, in order to maximize the market share.


Formally, let $d_k>0$ be the demand at customer location $k\in M=\{1,\dots,m\}$, and $v_{ki}>0$ be the utility of location $i$ to customers at $k$. Let $S\subseteq N:=\{1,\ldots,n\}$, $|S|=p$, be the set of facilities chosen by the decision-maker. It is assumed that the market share provided by facility $j\in S$ with respect to demand point $k$ is given by:
\[ d_k\frac{v_{kj}}{\sum_{i\in S}v_{ki}}. \]
Let $w_i>0$ be some weight parameter that represents the importance of locating facility in location $i\in N$. Then
the problem of determining the set of facility locations $S$ that maximizes the weighted market share can be formulated as:
\[ \max_{|S|=p}\;\sum_{i\in S}w_i\sum_{k\in M}d_k\frac{v_{ki}}{\sum_{i\in S}v_{ki}}, \]
which can be reorganized as
\begin{equation}\label{eq:pfl}
	\max_{|S|=p}\;\sum_{k\in M}d_k\frac{\sum_{i\in S}v_{ki}w_i}{\sum_{i\in S}v_{ki}}.
\end{equation}
Clearly, the model in \eqref{eq:pfl} can be formulated as a fractional 0--1 program given by (\ref{eq:intro}).


Note that from Proposition~\ref{prop:homo}, the objective function in \eqref{eq:pfl} is, in general, not  submodular since it is homogeneous.
Nonetheless, exploiting the equality constraint, we can convert the objective function to a non-homogeneous one. Define $v_{\min}^k=\delta\cdot \min_{i\in N}\{v_{ki}\}$ for some fixed $\delta\in(0,1)$. For any feasible solution $S$, where $|S|=p$, we also have that:
\[ \sum_{i\in S}v_{ki}=\sum_{i\in S}v_{\min}^k+\sum_{i\in S}(v_{ki}-v_{\min}^k)=pv_{\min}^k+\sum_{i\in S}(v_{ki}-v_{\min}^k). \]
 As a result, \eqref{eq:pfl} can be equivalently stated as:
\begin{equation}\label{eq:relaxedpfl}
\max_{|S|= p}\;\sum_{k\in M}d_k\frac{\sum_{i\in S}v_{ki}w_i}{pv_{\min}^k+\sum_{i\in S}(v_{ki}-v_{\min}^k)},
\end{equation}
\noindent where $v_{ki}-v_{\min}^k>0$ and $v_{\min}^k>0$ for all $i\in N$ and $k\in M$ by our construction procedure.

\looseness-1Recall our discussion on the links between monotonicity and submodularity in Section~\ref{subsec:monotonic}.
Applying inequality (\ref{eq:sufficient}), we find that a given ratio $k$ in the objective function of (\ref{eq:relaxedpfl}) is monotone nondecreasing over set $\mathcal{F}:=\{S \subseteq N:\ |S|\leq p\}$ if
\begin{equation}\label{eq:pChoice}\min_{i\in N}\frac{v_{ki}w_i}{v_{ki}-v_{\min}^k}\geq \max_{|S|\leq p}\frac{\sum_{i\in S}v_{ki}w_i}{pv_{\min}^k+\sum_{i\in S}(v_{ki}-v_{\min}^k)}.\end{equation}
Hence, if \eqref{eq:pChoice} holds for all ratios $k\in M$, then the feasibility set in \eqref{eq:relaxedpfl} can be relaxed to $|S|\leq p$. Consequently, Assumption \textbf{A3} is satisfied and \eqref{eq:relaxedpfl} reduces to the maximization problem of a submodular function by Proposition~\ref{prop:monotonicity}.

The right-hand side of \eqref{eq:pChoice} can be interpreted as the best average revenue weighted by market share, or simply the best total revenue that can be obtained from customer segment $k$. The intuition for (\ref{eq:pChoice}) to hold in the $p$-choice facility location problem is rather similar to our observations in the assortment optimization problem. Indeed, it is easy to verify, for example, that if all locations have the same utilities and weights, i.e., $v_{ki}=v^k$ and $w_i=w$ for all $i\in N$ and some $v^k$ and $w$, then (\ref{eq:pChoice}) holds. Moreover, from \eqref{eq:pChoice} we obtain the following sufficient condition.

\begin{proposition}
	Let $w_{\max}$ and $w_{\min}$ be the maximum and minimum weights, and let $v_{\max}^k$ be the maximum utility associated with customer segment $k$. If \begin{equation}\label{eq:pchoiceSufficient}
	\frac{w_{\min}}{w_{\max}}+1\geq \frac{v_{\max}^k}{v_{\min}^k},
	\end{equation}
 then the revenue of customer segment $k$ is submodular.
\end{proposition}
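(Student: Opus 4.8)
The plan is to verify that the hypothesis \eqref{eq:pchoiceSufficient} implies the monotonicity certificate \eqref{eq:pChoice} for ratio $k$. Once \eqref{eq:pChoice} holds, the discussion preceding the proposition shows that the $k$-th ratio is monotone nondecreasing over $\F=\{S\subseteq N:\ |S|\le p\}$, and Proposition~\ref{prop:monotonicity} then gives submodularity. Accordingly, I would bound the right-hand side of \eqref{eq:pChoice} from above, bound its left-hand side from below, and show that \eqref{eq:pchoiceSufficient} forces the upper bound below the lower bound.

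For the upper bound, the key observation is that for $|S|\le p$ the denominator in \eqref{eq:pChoice} satisfies $pv_{\min}^k+\sum_{i\in S}(v_{ki}-v_{\min}^k)=(p-|S|)v_{\min}^k+\sum_{i\in S}v_{ki}\ge\sum_{i\in S}v_{ki}$, since $v_{\min}^k>0$ and $|S|\le p$. Combining this with the numerator estimate $\sum_{i\in S}v_{ki}w_i\le w_{\max}\sum_{i\in S}v_{ki}$ yields that the ratio is at most $w_{\max}$ for every feasible $S$, so the right-hand side of \eqref{eq:pChoice} is at most $w_{\max}$. For the lower bound, I would use $w_i\ge w_{\min}$ together with the fact that $v\mapsto v/(v-v_{\min}^k)$ is decreasing on $(v_{\min}^k,\infty)$, hence minimized at $v=v_{\max}^k$; term by term this gives $\min_{i\in N}\frac{v_{ki}w_i}{v_{ki}-v_{\min}^k}\ge w_{\min}\frac{v_{\max}^k}{v_{\max}^k-v_{\min}^k}$. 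Thus \eqref{eq:pChoice} reduces to $w_{\min}v_{\max}^k\ge w_{\max}(v_{\max}^k-v_{\min}^k)$, i.e., to $\frac{v_{\max}^k}{v_{\min}^k}\le\frac{w_{\max}}{w_{\max}-w_{\min}}$ (when $w_{\max}>w_{\min}$).

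The step I expect to carry the content is matching this threshold to the clean hypothesis \eqref{eq:pchoiceSufficient}. Writing $x=w_{\min}/w_{\max}\in(0,1]$, the hypothesis reads $v_{\max}^k/v_{\min}^k\le 1+x$, whereas the inequality obtained above is $v_{\max}^k/v_{\min}^k\le 1/(1-x)$; since $(1+x)(1-x)=1-x^2\le 1$ gives $1+x\le 1/(1-x)$, the hypothesis is the stronger bound and the conclusion follows. The only case to isolate is $w_{\min}=w_{\max}$, where the denominator $w_{\max}-w_{\min}$ vanishes; there $w_{\min}\frac{v_{\max}^k}{v_{\max}^k-v_{\min}^k}\ge w_{\max}$ holds trivially because $v_{\max}^k\ge v_{\max}^k-v_{\min}^k$, so \eqref{eq:pChoice} is satisfied regardless. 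Note in particular that \eqref{eq:pchoiceSufficient} is a (cleaner) sufficient but not necessary condition, being slightly more restrictive than the exact threshold $v_{\max}^k/v_{\min}^k\le w_{\max}/(w_{\max}-w_{\min})$ that the two one-sided bounds actually produce.
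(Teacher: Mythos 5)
Your proof is correct, and it takes a mildly but genuinely different route from the paper's in the one place where the two arguments could differ: the upper bound on the right-hand side of \eqref{eq:pChoice}. The paper bounds the denominator below by $pv_{\min}^k$ alone and the numerator above by $w_{\max}\sum_{i\in S}v_{ki}\le w_{\max}\,p\,v_{\max}^k$, arriving at the bound $w_{\max}v_{\max}^k/v_{\min}^k$; combined with the same lower bound $w_{\min}v_{\max}^k/(v_{\max}^k-v_{\min}^k)$ on the left-hand side that you use, this produces \eqref{eq:pchoiceSufficient} exactly, with no further manipulation needed. You instead keep the full denominator, observe that it dominates $\sum_{i\in S}v_{ki}$, and get the sharper bound $w_{\max}$; this yields the strictly weaker (hence better) sufficient threshold $v_{\max}^k/v_{\min}^k\le w_{\max}/(w_{\max}-w_{\min})$, and you then correctly spend an extra step showing $1+x\le 1/(1-x)$ for $x=w_{\min}/w_{\max}$ to recover the stated hypothesis, including the degenerate case $w_{\min}=w_{\max}$. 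All the supporting facts you invoke are sound: $v_{ki}>v_{\min}^k$ holds by the construction $v_{\min}^k=\delta\min_i v_{ki}$ with $\delta\in(0,1)$, so the monotonicity of $v\mapsto v/(v-v_{\min}^k)$ applies, and the empty set causes no trouble in the upper bound. The net effect is that your argument proves a slightly stronger proposition than stated (the exact threshold $w_{\max}/(w_{\max}-w_{\min})$), at the cost of one additional algebraic comparison; the paper's cruder denominator bound buys a one-line match with the clean condition \eqref{eq:pchoiceSufficient}.
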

\begin{proof}
	Observe that since $w_i\geq w_{\min}$ and $\frac{v_{ki}}{v_{ki}-v_{\min}^k}\geq \frac{v_{\max}^k}{v_{\max}^k-v_{\min}^k}$, we find that $$\frac{v_{ki}w_i}{v_{ki}-v_{\min}^k}\geq \frac{v_{\max}^k}{v_{\max}^k-v_{\min}^k}w_{\min}.$$ Moreover, we also find that
	$$\max_{|S|\leq p}\frac{\sum_{i\in S}v_{ki}w_i}{pv_{\min}^k+\sum_{i\in S}(v_{ki}-v_{\min}^k)}\leq \max_{|S|\leq p}\frac{w_{\max}\sum_{i\in S}v_{ki}}{pv_{\min}^k}\leq \frac{w_{\max}v_{\max}^k}{v_{\min}^k}. $$
	After rearranging terms corresponding to the sufficient condition
	$$\frac{v_{\max}^k}{v_{\max}^k-v_{\min}^k}w_{\min}\geq \frac{w_{\max}v_{\max}^k}{v_{\min}^k},$$ we obtain precisely \eqref{eq:pchoiceSufficient}.
\end{proof}

Simply speaking, if the considered facility locations are sufficiently similar with respect to their utilities, i.e., $\frac{v_{\max}^k}{v_{\min}^k}\approx 1$, then ratios in \eqref{eq:relaxedpfl} are submodular. Submodularity may be preserved for larger spread of utilities, provided that the weights are sufficiently close. If all the considered facility locations are sufficiently similar with respect to their utilities and weights, then \eqref{eq:pfl} can be reduced to maximizing a submodular function; consequently, high-quality solutions can be obtained by a greedy approach, e.g., Algorithm~\ref{alg:greedy}.

 \ignore{Simply speaking, if the considered facility locations are sufficiently similar with respect to their utilities and weights, then \eqref{eq:pfl} can be reduced to maximizing a submodular function; consequently, high-quality solutions can be obtained by a greedy approach, e.g., Algorithm~\ref{alg:greedy}.}


\ignore{Inequality \eqref{eq:sufficient} is easily satisfied for low-utility locations, since the lhs of \eqref{eq:pChoice} goes to infinity as $v_{i\ell}\to v_{min}^i$. Finally, inequality \eqref{eq:pChoice} is satisfied for high-utility locations if the corresponding revenue is large as well \hsn{I guess this may be not true if the inequality is reversed.}. Therefore, we see that the revenue function corresponding to a customer segment is submodular if locations that command a large market share have large revenues as well.}

\ignore{
\begin{proposition}\label{prop:condition_pfl}
	If it holds for all $i\in M$ and $j\in N$ that
	\begin{equation}\label{eq:condition_pfl}
		p\ge n - \sum_{k\in N}\frac{v_{ik}}{v_{min}^i}\left(1-\frac{w_k}{w_j}\right)-\sum_{k\in N}\frac{v_{ik}}{v_{ij}}\frac{w_k}{w_j},
	\end{equation}
	then the objective function of \eqref{eq:relaxedpfl} is a monotone nondecreasing and submodular function.
\end{proposition}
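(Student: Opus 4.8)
The plan is to handle each customer segment separately and reduce the statement to Corollary~\ref{cor:monotonicity}. Fix a segment $i\in M$ and consider its single ratio in the reformulation \eqref{eq:relaxedpfl},
$$h^i(S):=\frac{\sum_{k\in S}v_{ik}w_k}{pv_{\min}^i+\sum_{k\in S}(v_{ik}-v_{\min}^i)},$$
which is exactly $h(\cdot;a,b)$ with $a_k=v_{ik}w_k$, $b_k=v_{ik}-v_{\min}^i$ and $b_0=pv_{\min}^i$. By construction $b_0>0$ and $b_k>0$ for every $k\in N$, so the denominator is strictly positive for all $S\subseteq N$ and $a_k/b_k=v_{ik}w_k/(v_{ik}-v_{\min}^i)$ is well defined; in particular the hypotheses of Theorem~\ref{prop:iff_cert} and Corollary~\ref{cor:monotonicity} are satisfied for $h^i$.

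First I would show that, for a fixed $j\in N$, condition \eqref{eq:condition_pfl} is precisely the inequality $h^i(N)\le a_j/b_j$ in disguise. Indeed, $h^i(N)\le a_j/b_j$ cross-multiplies (both denominators are positive) to $(v_{ij}-v_{\min}^i)\sum_{k\in N}v_{ik}w_k\le v_{ij}w_j\big(pv_{\min}^i+\sum_{k\in N}(v_{ik}-v_{\min}^i)\big)$; dividing through by $v_{ij}w_jv_{\min}^i>0$ and rearranging yields \eqref{eq:condition_pfl} verbatim. The telltale signs that this is the correct reading are the summation over all of $N$ and the appearance of the constant $n=|N|$: \eqref{eq:condition_pfl} is simply the monotonicity inequality \eqref{eq:monotonicity2}, $h^i(S)\le a_j/b_j$, specialized to $S=N$. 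I expect this algebraic identification to be the only real content of the argument, and hence the main obstacle, since everything afterwards is essentially bookkeeping.

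With that identification in hand, I would quantify over $j$: because \eqref{eq:condition_pfl} is assumed for every $j\in N$, we obtain $\min_{j\in N}a_j/b_j\ge h^i(N)$, which is exactly the hypothesis of Corollary~\ref{cor:monotonicity}. Consequently $h^i(\cdot)$ is monotone nondecreasing and submodular over $2^N$. Since $\F=\{S\subseteq N:\ |S|\le p\}\subseteq 2^N$ is downward closed by Assumption \textbf{A3}, both properties restrict to $\F$: the defining inequalities for monotonicity and for submodularity \eqref{eq:submodular} over $\F$ form a subfamily of those over $2^N$, so they continue to hold.

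Finally, since \eqref{eq:condition_pfl} is assumed for every $i\in M$, each ratio $h^i$ is monotone nondecreasing and submodular over $\F$. The objective of \eqref{eq:relaxedpfl} is the nonnegative combination $\sum_{i\in M}d_i\,h^i$ with $d_i>0$, and nonnegative combinations of monotone nondecreasing submodular functions are again monotone nondecreasing and submodular; this establishes the claim. In summary, once \eqref{eq:condition_pfl} is recognized as $h^i(N)\le a_j/b_j$, the proposition follows immediately from Corollary~\ref{cor:monotonicity} applied ratio by ratio.
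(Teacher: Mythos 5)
Your proposal is correct and follows essentially the same route as the paper's own proof: both reduce the claim to Corollary~\ref{cor:monotonicity} applied ratio by ratio with $a_k=v_{ik}w_k$, $b_k=v_{ik}-v_{\min}^i$, $b_0=pv_{\min}^i$, and verify by cross-multiplication and division by $v_{ij}w_jv_{\min}^i$ that condition \eqref{eq:condition_pfl} is exactly the monotonicity inequality $h^i(N)\le a_j/b_j$ for every $j\in N$. The only difference is that you make explicit two steps the paper leaves implicit -- the restriction of monotonicity and submodularity from $2^N$ to the downward-closed $\F$, and the preservation of both properties under the nonnegative combination $\sum_{i\in M}d_i h^i$ -- which is harmless extra care, not a different argument.
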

\begin{proof}
	According to Corollary~\ref{cor:monotonicity}, it suffices to ensure the following inequality holds for any $i\in M$ and $j\in N$
	\begin{align*}
		 &\frac{v_{ij}w_j}{v_{ij}-v_{min}^i}\ge\frac{\sum_{k\in N}v_{ik}w_k}{\sum_{k\in N}(v_{ik}-v_{min}^i)+pv_{min}^i}=\frac{\sum_{k\in N}v_{ik}w_k}{\sum_{k\in N}v_{ik}+(p-n)v_{min}^i},\\
		 \Leftrightarrow\;&(p-n)v_{min}^i+\sum_{k\in N}v_{ik}\ge\frac{(v_{ij}-v_{min}^i)\sum_{k\in N}v_{ik}w_k}{v_{ij}w_j}=\sum_{k\in N}v_{ik}\frac{w_k}{w_j}-v_{min}^i\sum_{k\in N}\frac{v_{ik}}{v_{ij}}\frac{w_k}{w_j},
	\end{align*}
	which isin equivalent to \eqref{eq:condition_pfl} by dividing $v^i_{min}$ and separating $p$ and the remainder.
\end{proof}
In an extreme case that all $v_{ik}$ are identical, it can be verified directly that the right hand side of \eqref{eq:condition_pfl} is 0, which implies submodularity holds anyway. It coincides with the fact that in this case, the objective of \eqref{eq:pfl} is a modular, i.e. linear, function. Hence, Proposition~\ref{prop:condition_pfl} generalizes this special case. Finally, Proposition~\ref{prop:condition_pfl} suggests that if the number of facilities to build is sufficiently large, then the greedy algorithm is worthwhile to utilize to get a constant approximation guarantee. This may happens, for example, when a chain retail plans to open a store in every large city of a state, but due to limited budget, a small fraction of the candidate cities has to be excluded or to be opened with the lowest priority.
}

\subsection{On minimization problems}

In this note we focus on identifying submodularity in maximization problems, in which case greedy algorithms can be used to obtain near optimal solutions. However, submodularity can be exploited in minimization problems as well. Indeed, the epigraph of a submodular set function is described by its Lov\'asz extension \citep{lovasz1983submodular}, which can be used to improve mixed-integer programming formulations via cutting planes. Moreover, even if a given ratio is not submodular, the results presented in this paper can be used to decompose any ratio into two components such that one of which is submodular (and strengthening can be done using the submodular component); see, e.g., \cite{atamturk2019submodular}.

\section{Conclusion}\label{sec:conclusions}
In this note we explore submodularity of the objective function for a broad class of fractional 0--1 programs with multiple-ratios. Under some mild assumptions, we derive the necessary and sufficient condition for a single ratio of two linear functions to be submodular. Therefore, if the derived condition holds for every considered single-ratio function, then simple greedy algorithms can be used to deliver good quality solutions for multiple-ratio fractional 0--1 programs. Finally, we also illustrate applicability of our results in the context of the assortment optimization and facility location problems.

\vspace{1mm}



{\textbf{Acknowledgements.}} This note is based upon work supported by the National Science Foundation
under Grant No.~1818700.
\bibliographystyle{apalike}
\bibliography{Bibliography}
\end{document}